\numberwithin{equation}{section}
\theoremstyle{plain}
\theoremstyle{remark}
\def \1{{\bf 1}}
\def\C{{\mathbb{C}}}
\def\Z{{\mathbb{Z}}}
\def\N{{\mathbb{N}}}
\theoremstyle{definition}
\newtheorem{lemma}{Lemma}[section]
\newtheorem{theorem}[lemma]{Theorem}
\newtheorem{corollary}[lemma]{Corollary}
\newtheorem{proposition}[lemma]{Proposition}
\newtheorem{definition}[lemma]{Definition}
\newtheorem{example}[lemma]{Example}
\newtheorem{remark}[lemma]{Remark}
\title{Hopf actions on  vertex algebras}
\author{Chongying Dong  \footnote{supported by the Simons foundation  634104 }}
\affil{Department of Mathematics, University of
California, Santa Cruz, CA 95064 USA}
\author{Li Ren \footnote{partially supported by NSFC grant 12071314}}
\affil{School of Mathematics,  Sichuan University,
	Chengdu 610064, China}
\author{Chao Yang \footnote{ The author is supported by the NSFC No. 12301039} }
\affil{School of Mathematics,  Southwest Jiaotong University,
	Chengdu 611756, China }
\begin{document}
\maketitle
\abstract

In this article, we investigate Hopf actions on vertex algebras.  
Our first main result is that 
every finite-dimensional Hopf algebra that inner faithfully acts on a given $\pi_2$-injective vertex algebra must be a group algebra.
Secondly, under suitable assumptions, we establish  a Schur-Weyl type duality for semisimple Hopf actions on Hopf modules of vertex algebras.

\section{Introduction}
This paper is a continuation of \cite{DW}.  We study Hopf actions on vertex algebras, and establish Schur-Weyl type dualities for semisimple Hopf actions on vertex algebras and their modules.

A systematic study of  Hopf actions on vertex operator algebras was first initiated in  \cite{DW}. It was proved that any finite-dimensional Hopf algebra that can faithfully act on a simple vertex operator algebra is the group algebra of some finite automorphism group of the vertex operator algebra. Moreover,   a Schur-Weyl type duality for semisimple Hopf actions on vertex operator algebras was obtained.

The concept of Hopf actions on vertex  algebras is a natural extension of the  concepts of
group and Lie algebra actions on vertex algebras.  Given a vertex  algebra $V$ with an action of a Hopf algebra $H$,  the fixed point subspace $V^H$ is a vertex  algebra. There are two central problems in Hopf action on vertex  algebra: 1) Determine what kind of Hopf algebra can act on a vertex algebra, 2) Understand  the structural and representation theory of $V^H.$  The purpose of this paper is to give some partial answers to these two problems.  Specifically, we prove that a finite-dimensional Hopf algebras that act on a $\pi_2$-injective vertex algebra is a group algebra, and establish a Schur-Weyl type duality for semisimple Hopf actions on Hopf modules of vertex algebras, improving and extending the results in \cite{DW} to the case of vertex algebras.

Note that any Hopf action on a vertex algebra can be uniquely transformed into an inner faithful Hopf action, while keeping the fixed point subalgebra $V^H$ unchanged. An inner faithful Hopf action means that the action does not factor through a smaller Hopf algebra. 
In order to  understand of the fixed point subalgebras $V^H$ under Hopf action, it is good enough to consider the inner faithful Hopf action.
Our first main result says that a finite-dimensional Hopf algebra which has an  inner faithful action  on a $\pi_2$-injective vertex algebra must be a group algebra. In general, we expect  that every Hopf algebra $H$ which has an inner faithful action on a $\pi_2$-injective vertex algebra is cocommutative.
This expectation is always true if $H$ is finite dimensional from the proof of Theorem \ref{thm-gr-alg}.
If this expectation is confirmed, then, based on the structure theory of cocommutative Hopf algebras (see Theorem 3.8.2 of  \cite{M}), 
understanding the structure and representation theory of fixed point subalgebras under Hopf actions
can be effectively reduced to studying the structure and representation theory of fixed point subalgebras under the actions of groups and Lie algebras.

We point out  that $\pi_2$-injective vertex algebras are a very broad class of vertex algebras, 
which include all simple vertex algebras of countable dimension and all nondegenerate vertex algebras defined in \cite{EK,Li1}. 

Now, we turn our attention to  the Schur-Weyl type duality.
Various versions of Schur-Weyl type duality can be found in \cite{DM,DLM1,DW,DRY,DY,DYa,MT,Ta,YaY}.
Another motivation for our work is to extend the Schur-Weyl type duality for semisimple Hopf actions on vertex operator algebras in \cite{DW} to the case of vertex algebras.
Let us state the result more explicitly:
Assume that $V$ is an irreducible vertex algebra of countable dimension,
and $H$ is a Hopf algebra such that $V$ is an $H$-module vertex algebra.
Let $M$ be a Hopf $V$-module such that $M$ is an irreducible $V$-module.
Additionally, we assume that both $V$ and $M$ are direct sums of finite-dimensional irreducible $H$-modules.
Our second main result in this paper  is the following Schur-Weyl type duality:
$$M=\bigoplus_{i \in I}W_i \otimes M_i,$$
where $\{W_i \ | \ i \in I \}$ is the set of all finite-dimensional inequivalent irreducible $H$-modules appearing in $M$,
and $M_i$ is the multiplicity space of $W_i$ in $M$.
Moreover, $M_i$ is an irreducible $V^H$-module and $M_i$ and $M_j$ are isomorphic $V^H$-modules if and only  $i=j$.
This result generalizes the  previous Schur-Weyl type duality for groups and Lie algebras actions on vertex (operator) algebras.

We should mention that the main tool used in \cite{DW} is the associative algebra $A_n(V)$ for $n\geq 0$ \cite{Z, DLM2} for studying $\Z_+$-graded $V$-modules. Since a vertex algebra itself is not even graded, we cannot use $A_n(V)$ theory in our proof.  
So in this paper we use the ideas and methods developed  in \cite{DRY} to study the Hopf actions on vertex algebras and prove the desired results.


This paper is organized as follows: In Section 2, we review the foundations of Hopf algebras and vertex algebras.
In Section 3, we introduce the concepts and basic properties of Hopf actions on vertex algebras.
In Section 4, we introduce the concept of $\pi_2$-injective vertex algebras and provide examples of such algebras.
In Section 5, we discuss Hopf actions on $\pi_2$-injective vertex algebras and
prove the main results of Theorem \ref{thm-gr-alg} and Theorem \ref{thm-Hopf-ideal}.
In Section 6, we establish a Schur-Weyl type duality for semisimple Hopf actions on Hopf modules of vertex algebras.


\section{Preliminaries}

Throughout this paper, we work over the complex field $\C$.
The unadorned symbol $\otimes$ means the tensor product over $\C$.
We denote by $\N$ the set of nonnegative integers.


\subsection{Vertex algebras}

\begin{definition} (\cite{B, LL})
A {\em vertex algebra} is a triple  $(V, {\bf 1}, Y(z) )$   consisting of a vector space $V$,  
a distinguished vector ${\bf 1} \in V$ called the {\em vacuum vector},
and a linear map
$$Y(z): \ V \to \text{End}_{\C}(V)[[z, z^{-1}]], \ \ \ v \to Y(v,z)=\sum_{n \in \Z}v_nz^{-n-1} \ \ (v_n \in \text{End}_{\C}(V) ), $$
satisfying the following conditions:
\begin{enumerate}[{(1)}]
\item For given $u, v \in V$, $ u_nv=0$ \ when  $n$ is sufficiently large;  

\item  $ Y({\bf 1},z)=Id_{V}; $

\item For $u \in V$, we have $ Y(u,z){\bf 1}= u+ (u_{-2}{\bf 1})z + (u_{-3}{\bf 1}) z^2 + \cdots  \in V[[z]];$

\item the Jacobi identity holds for $u, v \in V$,
\begin{align*}
& \displaystyle{z^{-1}_0\delta\left(\frac{z_1-z_2}{z_0}\right)Y(u,z_1)Y(v,z_2)
-z^{-1}_0\delta\left(\frac{z_2-z_1}{-z_0}\right)Y(v,z_2)Y(u,z_1)}\\
& \displaystyle{=z_2^{-1}\delta \left(\frac{z_1-z_0}{z_2}\right) Y(Y(u,z_0)v,z_2)}.
\end{align*}
\end{enumerate}

In the Jacobi identity, $\delta(z)=\sum_{n \in \Z } z^n$ is the formal delta function, and 
all binomial expressions (here and below) are to be expanded in nonnegative integral powers of the second variable.

\end{definition}

\begin{definition}
Let $V$ be a vertex algebra.
A $V$-module is a vector space $M$ equipped with a linear map
$$Y_M(z): \ V \to \text{End}_{\C}(M)[[z, z^{-1}]], \ \ \ v \to Y_M(v,z)=\sum_{n \in \Z}v_nz^{-n-1} \ \ (v_n \in \text{End}_{\C}(M) ), $$
satisfying the following axioms:
\begin{enumerate}[{(1)}]
\item For $u \in V$ and $w \in M$, $u_n w =0$ for sufficiently large $n$;  

\item $Y_M({\bf 1},z)=Id_M$;

\item  the Jacobi identity holds for $u, v \in V$,
\begin{align*}
& \displaystyle{ z^{-1}_0\delta\left(\frac{z_1-z_2}{z_0}\right)
Y_M(u,z_1)Y_M(v,z_2)-z^{-1}_0\delta\left(\frac{z_2-z_1}{-z_0}\right)Y_M(v,z_2)Y_M(u,z_1)}\\
& \displaystyle{=z_2^{-1}\delta\left(\frac{z_1-z_0}{z_2}\right) Y_M(Y(u,z_0)v,z_2)}.
\end{align*}
\end{enumerate}

\end{definition}

\begin{example}
Let $(A, \partial)$ be a commutative differential algebra with a unit element $\bf 1$,
i.e. $A$ is a commutative associative algebra  and  $\partial$ is a derivation of $A$.
For $a , b \in A,$ we define
$$Y^{(A,\partial)}(a,z)b=(e^{z\partial}a)b=\sum_{n=0}^{\infty}\frac{1}{n!}(\partial^na)bz^n.$$
Then $(A, Y^{(A,\partial)}( z), {\bf 1})$ forms a commutative vertex algebra \cite{B,LL}.
If there is no ambiguity, we still use $(A, \mathcal{\partial})$ to denote the commutative vertex algebra $(A, Y^{(A,\partial)}( z), {\bf 1})$.
\end{example}

The following results are direct consequences of definition (see e.g. \cite{LL}).
\begin{proposition} \label{ED-der}
Let $V$ be a vertex algebra. Let $\mathcal{D}$ be the endomorphism of $V$ defined by
$\mathcal{D}(v)=v_{-2}\1 $  for $v \in V$. Then the following identities hold:
\begin{enumerate}[{(1)}]
\item $Y(u,z){\bf 1}=e^{z\mathcal{D}}{\bf 1};$

\item $Y(u, z)v =e^{z \mathcal{D}}Y(v,-z)u;$

\item $e^{z_0\mathcal{D}}Y(u,z)e^{-z_0\mathcal{D}}=e^{z_0\frac{d}{dz}}Y(u,z)=Y(u, z+z_0).$
\end{enumerate}
\end{proposition}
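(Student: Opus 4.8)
The three identities are the standard ``translation operator'' properties of $\mathcal{D}$, and the plan is to peel them off the Jacobi identity one at a time; the base point is the trivial observation $\mathcal{D}{\bf 1}={\bf 1}_{-2}{\bf 1}=0$, which is immediate from $Y({\bf 1},z)=Id_V$.

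For (1) I would first extract the $\mathcal{D}$-derivative property $\frac{d}{dz}Y(u,z)=Y(\mathcal{D}u,z)$ from the Jacobi identity. Setting $v={\bf 1}$, using $Y({\bf 1},z_2)=Id_V$ on the left together with the elementary $\delta$-function identity
\[
z_0^{-1}\delta\!\left(\tfrac{z_1-z_2}{z_0}\right)-z_0^{-1}\delta\!\left(\tfrac{z_2-z_1}{-z_0}\right)=z_2^{-1}\delta\!\left(\tfrac{z_1-z_0}{z_2}\right),
\]
the Jacobi identity collapses to $z_2^{-1}\delta(\tfrac{z_1-z_0}{z_2})Y(u,z_1)=z_2^{-1}\delta(\tfrac{z_1-z_0}{z_2})Y(Y(u,z_0){\bf 1},z_2)$. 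Expanding $Y(u,z_0){\bf 1}=\sum_{n\ge0}(u_{-n-1}{\bf 1})z_0^{\,n}$ (creation axiom), applying $\Res_{z_1}$ (which substitutes $z_1\mapsto z_0+z_2$ and, on the left, produces $Y(u,z_0+z_2)$ expanded in nonnegative powers of $z_0$ via the formal Taylor theorem), and comparing coefficients of $z_0^{\,n}$ gives $\frac{1}{n!}\frac{d^{\,n}}{dz^{\,n}}Y(u,z)=Y(u_{-n-1}{\bf 1},z)$, whose $n=1$ case is the $\mathcal{D}$-derivative property. Then, with $f(z):=Y(u,z){\bf 1}\in V[[z]]$, iterating that property yields $f^{(n)}(z)=Y(\mathcal{D}^{\,n}u,z){\bf 1}$, hence $f^{(n)}(0)=\mathcal{D}^{\,n}u$ by the creation axiom, so $Y(u,z){\bf 1}=\sum_{n\ge0}\frac{z^{\,n}}{n!}\mathcal{D}^{\,n}u=e^{z\mathcal{D}}u$.

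For (3), apply the Jacobi identity to ${\bf 1}$ and take $\Res_{z_1}$: the second left-hand term drops out because both its $\delta$-factor and $Y(u,z_1){\bf 1}=e^{z_1\mathcal{D}}u$ carry only nonnegative powers of $z_1$, and (using (1)) the surviving identity reads $Y(u,z_0+z_2)e^{z_2\mathcal{D}}v=e^{z_2\mathcal{D}}Y(u,z_0)v$ for every $v\in V$. As an operator identity this is $Y(u,z_0+z_2)e^{z_2\mathcal{D}}=e^{z_2\mathcal{D}}Y(u,z_0)$, and since $e^{z_2\mathcal{D}}$ is invertible we obtain $e^{z_2\mathcal{D}}Y(u,z_0)e^{-z_2\mathcal{D}}=Y(u,z_0+z_2)$, which is the first equality of (3); the middle one is once more the formal Taylor theorem $e^{z_0\frac{d}{dz}}Y(u,z)=Y(u,z+z_0)$. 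For (2), apply the Jacobi identity to ${\bf 1}$ again and rewrite all three terms via (1): $Y(u,z_1)Y(v,z_2){\bf 1}=Y(u,z_1)e^{z_2\mathcal{D}}v$, $Y(v,z_2)Y(u,z_1){\bf 1}=Y(v,z_2)e^{z_1\mathcal{D}}u$, $Y(Y(u,z_0)v,z_2){\bf 1}=e^{z_2\mathcal{D}}Y(u,z_0)v$. Using (3) to push the vertex operators through the exponentials and the $\delta$-functions to replace $z_1-z_2$ by $z_0$ in the first term and $z_2-z_1$ by $-z_0$ in the second (so that $e^{z_1\mathcal{D}}$ there becomes $e^{z_2\mathcal{D}}e^{z_0\mathcal{D}}$), the factor $e^{z_2\mathcal{D}}$ cancels and one more use of the $\delta$-identity above reduces the whole thing to $z_0^{-1}\delta(\tfrac{z_2-z_1}{-z_0})\bigl[Y(u,z_0)v-e^{z_0\mathcal{D}}Y(v,-z_0)u\bigr]=0$; taking $\Res_{z_2}$ turns the $\delta$-factor into a nonzero scalar and leaves the $z_1,z_2$-free bracket, giving $Y(u,z_0)v=e^{z_0\mathcal{D}}Y(v,-z_0)u$, i.e. (2).

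The exponential bookkeeping and the appeals to the formal Taylor theorem are routine; the step I expect to be the real obstacle is the $\delta$-function bookkeeping — keeping every binomial expanded in the correct variable and, in particular, justifying that the substitutions $z_1\mapsto z_0+z_2$ inside $Y(u,z_1)$ and inside $e^{z_1\mathcal{D}}$ make sense, which requires some care with the finiteness of the formal series that occur.
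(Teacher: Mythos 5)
Your argument is correct and is essentially the standard derivation that the paper itself omits (it simply cites \cite{LL} for this proposition): the $\mathcal{D}$-derivative/creation identity $Y(u,z_0+z_2)=Y(Y(u,z_0)\mathbf{1},z_2)$ obtained from the Jacobi identity with $v=\mathbf{1}$, then the conjugation formula from weak associativity on the vacuum, then skew-symmetry from the Jacobi identity on the vacuum. Note that as printed, item (1) of the proposition contains a typo ($e^{z\mathcal{D}}\mathbf{1}$ should be $e^{z\mathcal{D}}u$); your proof establishes the correct statement. The only step worth writing out with care in a full write-up is the last one in (2): after left-multiplying by $e^{-z_2\mathcal{D}}$ to remove the common exponential factor, $\Res_{z_2}$ of $z_0^{-1}\delta\bigl(\frac{z_2-z_1}{-z_0}\bigr)$ is indeed the nonzero scalar $-1$, so the bracket vanishes as you claim.
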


\begin{definition}
An  {\em automorphism} of a vertex algebra $V$ is an invertible linear map $g$ of $V$ such that
$g\1=\1$ and $gY(u,z)g^{-1}=Y(gu,z)$ for all $u\in V$. We denote by $\text{Aut}(V)$ the set of all automorphisms of $V$.
\end{definition}

\begin{definition}
A {\em derivation} of a vertex algebra $V$ is a linear map $d$ of $V$
such that $d{\bf 1}=0$ and $[d, Y(v,z)]=Y(dv,z)$ for all $v \in V$.
Denote the set of all derivations of $V$ by $\text{Der}(V)$.
Note that $\text{Der}(V)$ is a Lie algebra.
\end{definition}

\begin{definition}
\begin{enumerate}[{(1)}]
\item A vertex algebra $V$ is said to be {\em irreducible} if $V$ is an irreducible $V$-module.

\item A vertex algebra $V$ is said to be {\em simple} if it does not contain any $\mathcal{D}$-stable $V$-submodule other than $0$ and $V$.
\end{enumerate}
\end{definition}

\begin{remark}
\begin{enumerate}[{(1)}]
\item If $I$ is a  $\mathcal{D}$-stable $V$-submodule if and only if $I$ is an ideal of $V$ defined in \cite{LL}.

\item If $V$ is a vertex operator algebra, then $V$ is irreducible if and only if $V$ is simple.

\item However, not every simple vertex algebra is irreducible.
For example, the commutative vertex algebra $(\C[x], \frac{d}{dx})$ is simple, but not irreducible.
\end{enumerate}
\end{remark}

\subsection{Hopf algebras}

 In this section, we review some basic facts on Hopf algebras. 
 Let $H=(H,\mu,\eta,\Delta,\epsilon,S)$ be a Hopf algebra,
 where the linear maps
 $$\mu:H\otimes H\rightarrow H,~\eta:\mathbb{C}\rightarrow H,~\Delta:H\rightarrow H\otimes H,~\epsilon:H\rightarrow \mathbb{C}, \ \
 \text{and} \ \ S:H\rightarrow H$$
 represent  multiplication, unit, comultiplication, counit, and antipode, respectively.
 Throughout this paper, we will use Sweedler's notation,
where for $h \in H$, we write $\Delta(h)=\sum h_1 \otimes h_2$ and  write
$$(\Delta \otimes Id)\Delta(h)=(Id \otimes \Delta)\Delta(h)=\sum h_1 \otimes h_2 \otimes h_3.$$



\begin{definition}
A Hopf algebra $H$ is called {\em cocommutative } if $\sum h_1 \otimes h_2 = \sum h_2 \otimes h_1$ for any $h \in H$.
\end{definition}

The following Lemma is well-known (see e.g. \cite{M}).

\begin{lemma} \label{gr-alg}
If $H$ is a finite-dimensional cocommutative Hopf algebra, then it is a group algebra.
\end{lemma}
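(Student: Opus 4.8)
The plan is to pass to the linear dual. Since $H$ is finite-dimensional, $H^{\ast}=\mathrm{Hom}_{\C}(H,\C)$ is again a Hopf algebra, with product dual to the comultiplication $\Delta$ of $H$ and comultiplication dual to the multiplication $\mu$ of $H$; the hypothesis that $H$ is cocommutative translates precisely into $H^{\ast}$ being commutative. Thus it suffices to prove that a finite-dimensional commutative Hopf algebra over $\C$ is isomorphic, as a Hopf algebra, to the function algebra $\C^{G}$ of a finite group $G$, for then dualizing back gives $H\cong(\C^{G})^{\ast}\cong\C[G]$.

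First I would invoke a theorem of Cartier: over a field of characteristic zero every commutative Hopf algebra is reduced, i.e.\ has no nonzero nilpotent elements (see, e.g., \cite{M}). Applying this to the finite-dimensional commutative algebra $H^{\ast}$, reducedness together with algebraic closedness of $\C$ forces $H^{\ast}\cong\C^{n}$ as an algebra, where $n=\dim H$. Concretely, $H^{\ast}\cong\C^{G}$ with $G=\mathrm{Alg}_{\C}(H^{\ast},\C)$ the (finite) set of algebra homomorphisms $H^{\ast}\to\C$, which is canonically identified with the set $G(H)$ of grouplike elements of $H$. Since grouplike elements of any Hopf algebra are linearly independent, $G=G(H)$ is indeed finite, with $|G|\le\dim H$.

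Next I would check that the Hopf structure turns $G$ into a group: the grouplikes of a Hopf algebra form a group under multiplication, with identity the unit $1_{H}$ and inverse given by the antipode $S$. One then verifies that the algebra isomorphism $H^{\ast}\cong\C^{G}$ is compatible with the coalgebra structures, i.e.\ that the comultiplication on $H^{\ast}$ dual to $\mu$ corresponds to the comultiplication on $\C^{G}$ coming from the group law of $G$, and likewise for counit and antipode. Dualizing the resulting Hopf algebra isomorphism $H^{\ast}\cong\C^{G}=(\C[G])^{\ast}$ yields $H\cong\C[G]$.

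Alternatively, one may cite the Cartier--Gabriel--Kostant structure theorem directly: over an algebraically closed field of characteristic zero every cocommutative Hopf algebra is isomorphic to $U(\mathfrak{g})\#\C[G(H)]$ with $\mathfrak{g}=P(H)$ the Lie algebra of primitive elements; when $\dim H<\infty$ the PBW theorem forces $\mathfrak{g}=0$ and $G(H)$ finite, whence $H\cong\C[G(H)]$. Either way, the essential point --- and the only place the argument can fail in positive characteristic --- is the appeal to characteristic zero, which enters through Cartier's reducedness theorem (equivalently, through the semisimplicity of finite-dimensional commutative Hopf algebras in characteristic zero). That is the step I would regard as the heart of the matter; everything else is formal duality together with the elementary fact that grouplike elements form a group.
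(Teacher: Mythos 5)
Your proof is correct, and it is essentially the standard argument: the paper offers no proof of its own, simply citing Montgomery \cite{M}, where this lemma is obtained exactly by the duality/Cartier--Gabriel--Kostant route you describe (finite dimensionality killing the primitive part so that $H\cong\C[G(H)]$). Nothing further is needed.
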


\begin{definition}
A subspace $I$ of a Hopf algebra $H$ is called a {\em Hopf ideal} if it satisfies the following conditions:

\begin{enumerate}[{(1)}]
\item  $IH \subseteq I$ and $HI \subseteq I$.

\item  $\Delta(I) \subseteq H \otimes I + I \otimes H$ and $\epsilon(I) =0$.

\item  $S(I) \subseteq I$.
\end{enumerate}
A subspace $I$ of $H$ with properties (1) and (2) is called a {\em bialgebra ideal} of $H$.

\end{definition}

The following Lemma from \cite{N} is useful later.

\begin{lemma}  \label{bi-ideal}
 Every bialgebra ideal of a finite-dimensional Hopf algebra is a Hopf ideal.
\end{lemma}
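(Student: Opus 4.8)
Proof proposal for Lemma \ref{bi-ideal} (every bialgebra ideal of a finite-dimensional Hopf algebra is a Hopf ideal).

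The plan is to show that a bialgebra ideal $I$ of a finite-dimensional Hopf algebra $H$ is automatically stable under the antipode, i.e. $S(I)\subseteq I$; the other two defining properties of a Hopf ideal are already assumed. Since $I$ satisfies $IH\subseteq I$, $HI\subseteq I$, $\Delta(I)\subseteq H\otimes I+I\otimes H$ and $\epsilon(I)=0$, the quotient $\bar H=H/I$ inherits a well-defined bialgebra structure, and the canonical projection $\pi\colon H\to\bar H$ is a bialgebra morphism. So it suffices to prove that $\bar H$ is itself a Hopf algebra: if $\bar H$ has an antipode $\bar S$, then by uniqueness considerations one gets $\pi\circ S=\bar S\circ\pi$, whence $S(I)=S(\ker\pi)\subseteq\ker\pi=I$.

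The key step, then, is the classical fact that a finite-dimensional bialgebra which is a quotient of a Hopf algebra is a Hopf algebra — more precisely, one invokes the theorem that a finite-dimensional bialgebra admitting a \emph{one-sided} convolution inverse of the identity (or, in our situation, inheriting enough structure from $H$) has a two-sided antipode. Concretely, I would argue as follows. First I would observe that in a finite-dimensional bialgebra $\bar H$, the identity map $\mathrm{id}_{\bar H}$ lies in the convolution algebra $\mathrm{End}_{\C}(\bar H)$, and that $\pi$ intertwines convolution: $\pi\circ(f*g)=(\pi\circ f)\ast'(g\circ?\ )$ — more carefully, the map $S_H$ followed by $\pi$ gives a convolution inverse to $\pi$ as an element of $\mathrm{Hom}_{\C}(H,\bar H)$, and one then descends this to produce a \emph{left} convolution inverse $\bar S$ of $\mathrm{id}_{\bar H}$ in $\mathrm{End}_{\C}(\bar H)$. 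Second, I would upgrade this one-sided inverse to a genuine antipode: in a finite-dimensional bialgebra, if $\mathrm{id}$ has a left convolution inverse then it has a two-sided one — this uses that $\mathrm{End}_{\C}(\bar H)$ is finite-dimensional, so an element with a left inverse in this algebra, once one knows the relevant element is not merely a unit but specifically $\mathrm{id}$, is invertible; the cleanest route is Takeuchi's criterion or the standard argument that a bialgebra map image of a Hopf algebra inherits the antipode.

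The main obstacle is the upgrade from a one-sided antipode to a two-sided one: for infinite-dimensional bialgebras a left-invertible identity need not be right-invertible, so finite-dimensionality must genuinely be used. I would handle this by the following standard device: set $\bar S=\pi\circ S_H$ (as a linear map $\bar H\to\bar H$, well-defined because $S_H(I)$ need not a priori lie in $I$ — wait, that is exactly what we are trying to prove). To avoid circularity, instead define $\bar S$ purely internally as a left convolution inverse of $\mathrm{id}_{\bar H}$, whose \emph{existence} follows from the existence of $\pi\circ S_H\in\mathrm{Hom}(H,\bar H)$ together with a dimension-counting argument in $\mathrm{Hom}(H,\bar H)$ and $\mathrm{End}(\bar H)$; then show $\bar S$ is also a right inverse by the finite-dimensional bialgebra lemma. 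Once $\bar H$ is a Hopf algebra, $\pi$ is a Hopf-algebra morphism, hence commutes with antipodes, and $S(I)\subseteq I$ follows immediately. Alternatively, and more economically, one may simply cite \cite{N} for the statement as done in the excerpt; the argument above records why it is true.
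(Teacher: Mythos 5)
The paper gives no argument for this lemma at all --- it simply cites Nichols \cite{N} --- so any proof you write is ``different from the paper'' by default. Your outline is essentially the standard (Nichols-style) argument and its skeleton is correct: reduce to showing that the finite-dimensional bialgebra $\bar H=H/I$ admits an antipode $\bar S$, then observe that both $\pi\circ S$ and $\bar S\circ\pi$ are two-sided convolution inverses of $\pi$ in $\mathrm{Hom}_{\C}(H,\bar H)$, hence equal, which forces $S(I)\subseteq\ker\pi=I$. You also correctly catch the circularity in the naive move of ``descending'' $\pi\circ S$ to $\bar H$, and you correctly locate where finite-dimensionality enters. The one step you leave as a black box (``a dimension-counting argument'') deserves to be made precise, because it is the whole content: the map $\mathrm{End}_{\C}(\bar H)\to\mathrm{Hom}_{\C}(H,\bar H)$, $f\mapsto f\circ\pi$, is an \emph{injective unital algebra homomorphism} of convolution algebras (injective because $\pi$ is surjective, multiplicative because $\pi$ is a coalgebra map), and it sends $\mathrm{id}_{\bar H}$ to $\pi$, which is convolution-invertible with inverse $\pi\circ S$. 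Hence $\mathrm{id}_{\bar H}$ is a two-sided non-zero-divisor in the finite-dimensional algebra $\mathrm{End}_{\C}(\bar H)$, so left and right convolution by it are bijective, which produces a right and a left inverse; these coincide, giving the antipode $\bar S$. Note that it is not merely the upgrade from one-sided to two-sided inverses that uses finite-dimensionality --- the very \emph{existence} of a one-sided inverse of $\mathrm{id}_{\bar H}$ inside $\mathrm{End}_{\C}(\bar H)$ (as opposed to the invertibility of $\pi$ in the larger algebra) already requires it; correspondingly, your parenthetical appeal to ``the standard argument that a bialgebra map image of a Hopf algebra inherits the antipode'' is not a general fact and should be dropped, since a bialgebra quotient of an infinite-dimensional Hopf algebra need not be a Hopf algebra --- that failure is exactly why the lemma carries the finite-dimensionality hypothesis. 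With that step filled in, your proof is complete and self-contained, which is arguably an improvement over the bare citation in the text.
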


We will also need the following facts. Let $M$ and $N$ be $H$-modules.
\begin{enumerate}[{(1)}]
\item $M \otimes N$ has an $H$-module structure under the action defined by
$$h(m \otimes n)=\sum h_1m \otimes h_2n$$
for $h \in H, m \in M$, and $n \in N$.

\item $\text{Hom}_{\C}(M, N)$ is an $H$-module under the action defined by
$$(h \cdot f)(m)=\sum h_1f(S(h_2)m)$$
for $h \in H, f \in \text{Hom}_{\C}(M, N), m \in M$.

\item Let $M^H=\{m \in M \ | \ hm=\epsilon(h)m \}.$ Then
$$\text{Hom}_{H}(M, N) = \text{Hom}_{\C}(M, N)^H.$$
\end{enumerate}

\begin{definition}
Given a Hopf algebra $H$ and an associative algebra $A$, we say that $H$ acts on $A$ (or that $A$ is an {\em $H$-module algebra}) if
the following conditions hold:
\begin{enumerate}[{(1)}]
\item $A$ is an $H$-module,

\item $h 1= \epsilon(h)1$ for any $h \in H$,

\item $h(ab)=\sum (h_1a)(h_2b)$ for any $h \in H,  a, b \in A$.
\end{enumerate}
\end{definition}

\begin{definition}
Given an $H$-module $M$, we say that $M$ is an {\em inner faithful} $H$-module if $IM \neq 0$ for every nonzero Hopf ideal $I$ of $H$.

Given an action of a Hopf algebra $H$ on an algebra $A$, we say that this action is inner faithful if $A$ is an inner faithful $H$-module.
\end{definition}

\begin{example} \label{Sweedler}
Sweedler's Hopf algebra $\mathcal{H}$ is the Hopf algebra generated by $g$ and $ x$ with relations
$$g^2=1, \ \ x^2=0, \ \ xg=-gx$$
and Hopf algebra structure
$$\Delta(g)=g \otimes g, \  \Delta (x) =x \otimes 1+ g\otimes x,  \ \epsilon(g)=1, \  \epsilon(x)=0,\ S(g)=g, \ \text{and} \ S(x)=-gx.$$
According to \cite{EW}, the polynomial algebra $\C[z]$ is an inner faithful $\mathcal{H}$-module algebra
under the action defined by
$$gz=-z, ~~~~xz=1.$$

\end{example}

\section{Hopf actions on vertex algebras}

\begin{definition} \cite{DW} \label{D}
Given a Hopf algebra $H$ and a vertex algebra $V$, we say that $H$ acts on $V$ (or that $V$ is an {\em $H$-module vertex algebra})
if the following conditions hold:
\begin{enumerate}[{(1)}]
  \item $V$ is an $H$-module.

  \item $h {\bf 1}=\epsilon(h){\bf 1}$, for any $ h\in H.$

  \item $h (Y(u,z)v)=\sum Y(h_1u,z)h_2v,$ for any $ h\in H,~u,v\in V.$
\end{enumerate}
\end{definition}

\begin{definition} \label{def-inner}
Given an action of a Hopf algebra $H$ on a vertex algebra $V$, we say that this action is {\em inner faithful} if $V$ is an inner faithful $H$-module.
\end{definition}

\begin{remark}
When discussing Hopf actions on vertex algebras (or algebras), one may argue that inner faithfulness is a more useful notion than faithfulness,
as one can pass uniquely to an inner faithful Hopf action if necessary:
Given an $H$-module vertex algebra $V$, it is easy to see that $H$ has a unique maximal Hopf ideal $I$ with $IV=0$.
Consequently, $H/I$ forms  a Hopf algebra and $V$ becomes an inner faithful $H/I$-module vertex algebra.
\end{remark}

\begin{definition}
Let $H$ be a Hopf algebra, and let $V$ be an $H$-module vertex algebra.
A {\em Hopf $V$-module} is a $V$-module $M$ that is also an $H$-module,  
satisfying the following compatibility condition for any $ h\in H,~ u \in V,$ and $w \in M$:  
\begin{align} \label{Hopf-V-module}
h (Y_M(u,z)w)=\sum Y_M(h_1u,z)h_2w.
\end{align}

\end{definition}

Let $V$ be an $H$-module vertex algebra. Define
$$V^H=\{v \in V \ | \ hv=\epsilon(h)v \}.$$

\begin{lemma} \label{le2-20}
Let $V$ be an $H$-module vertex algebra, and let $M$ be a Hopf $V$-module. Then

\begin{enumerate}[{(1)}]
  \item $V^H$ is a vertex subaglebra of $V$.

  \item The actions of $H$ and $\mathcal{D}$ on $V$ commute.

  \item The actions of $H$ and $V^H$ on $M$ commute.
\end{enumerate}
\end{lemma}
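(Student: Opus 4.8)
The plan is to verify each of the three claims by combining the defining axioms of an $H$-module vertex algebra (Definition \ref{D}) with the identity $\mathcal{D}(v)=v_{-2}\1$ from Proposition \ref{ED-der}. For part (1), I would first check closure under the vertex operator map: given $u,v\in V^H$ and $h\in H$, axiom (3) of Definition \ref{D} gives $h(Y(u,z)v)=\sum Y(h_1u,z)h_2v=\sum \epsilon(h_1)\epsilon(h_2)Y(u,z)v=\epsilon(h)Y(u,z)v$, using that $\epsilon$ is a coalgebra map so $\sum\epsilon(h_1)\epsilon(h_2)=\epsilon(h)$. Hence $u_nv\in V^H$ for all $n$. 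That $\1\in V^H$ is immediate from axiom (2). So $V^H$ is a vertex subalgebra.

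For part (2), the key point is that $\mathcal{D}$ is built from the vertex operator: $\mathcal{D}(v)=v_{-2}\1$, i.e. $\mathcal{D}(v)$ is the coefficient of $z$ in $Y(v,z)\1$. Applying $h$ and using axiom (3) together with axiom (2), $h(Y(v,z)\1)=\sum Y(h_1v,z)h_2\1=\sum Y(h_1v,z)\epsilon(h_2)\1=Y(\sum h_1\epsilon(h_2)\,v,z)\1=Y(hv,z)\1$, where I used the counit axiom $\sum h_1\epsilon(h_2)=h$. Extracting the coefficient of $z$ on both sides yields $h(\mathcal{D}v)=\mathcal{D}(hv)$, which is exactly commutativity of the two actions.

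For part (3), I would argue similarly but using the Hopf $V$-module compatibility condition \eqref{Hopf-V-module} in place of axiom (3) of Definition \ref{D}. Given $a\in V^H$, $h\in H$, and $w\in M$, compatibility gives $h(Y_M(a,z)w)=\sum Y_M(h_1a,z)h_2w=\sum Y_M(\epsilon(h_1)a,z)h_2w=\sum Y_M(a,z)\epsilon(h_1)h_2w=Y_M(a,z)(hw)$, again by the counit axiom $\sum\epsilon(h_1)h_2=h$. Comparing coefficients of powers of $z$, this says $h(a_n w)=a_n(hw)$ for all $n\in\Z$, i.e. the action of $a_n$ on $M$ commutes with the action of $h$; since $V^H$ acts on $M$ through the modes $a_n$ for $a\in V^H$, the two actions commute. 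None of the three parts presents a real obstacle — the whole lemma is a bookkeeping exercise with Sweedler notation — but the one spot to be careful about is making sure the counit identities $\sum\epsilon(h_1)\epsilon(h_2)=\epsilon(h)$, $\sum h_1\epsilon(h_2)=h$, and $\sum\epsilon(h_1)h_2=h$ are invoked correctly, and that in parts (2) and (3) the passage from the generating-function identity to the mode-by-mode identity is justified by comparing coefficients.
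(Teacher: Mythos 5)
Your proposal is correct and follows essentially the same route as the paper: all three parts are Sweedler-notation bookkeeping using axiom (3) of Definition \ref{D} (resp.\ the compatibility condition (\ref{Hopf-V-module})) together with the counit identities, exactly as in the paper's proof, the only cosmetic differences being that the paper cites \cite{DW} for part (1) instead of writing out the one-line verification, and works mode-by-mode where you use the generating function $Y(v,z)\1$ and extract coefficients.
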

\begin{proof}
(1) See \cite[Lemma 3.6]{DW}.

(2) By definition, for $h \in H$ and $v \in V$, we have
$$h(\mathcal{D}v)=h(v_{-2}{\bf 1})=\sum (h_1v)_{-2}(h_2{\bf 1})=\sum (\epsilon(h_2)h_1v)_{-2}{\bf 1}=(hv)_{-2}{\bf 1}=\mathcal{D}(hv).$$
as required.

(3) For $v \in V^H,  h \in H,   w \in M$, and $n \in \Z $, we have
$$hv_nw=\sum(h_1v)_nh_2w=\sum (\epsilon(h_1)v)_nh_2w=\sum v_n(\epsilon(h_1)h_2)w=v_nhw,$$
as required.

\end{proof}

\begin{example}
For any Hopf algebra $H$ and any vertex algebra $V$, we have the trivial action $hv=\epsilon(h)v$ for all $h \in H, v \in V$.
\end{example}

\begin{example}
Let $V$ be a vertex algebra and $G$ an automorphism group of $V$.
Then the group algebra $\C[G]$ forms a Hopf algebra, and $V$  becomes a $\C[G]$-module vertex algebra.
We note that $$V^{\C[G]}=V^G:=\{v \in V \ | \ gv=v \ \text{for all} \ g \in G \}.$$

\end{example}

\begin{example} \label{exp-der-action}
Consider  $\mathfrak{g}$ as  a Lie subalgebra of $\text{Der}(V)$.
Let $U(\mathfrak{g})$ be the universal enveloping algebra of the Lie algebra $\mathfrak{g}$.
Then $U(\mathfrak{g})$ forms a Hopf algebra, and $V$ becomes a $U(\mathfrak{g})$-module vertex algebra.
We note that
$$V^{U(\mathfrak{g})}=V^\mathfrak{g}:= \{v \in V  \  | \ gv=0 \ \  \text{for all } \   g \in \mathfrak{g} \}.$$

\end{example}

\begin{example}
Let $V$ be a vertex algebra with an automorphism $\sigma$ of order two.
Let $\mathcal{H}$ be the Sweedler's Hopf algebra (see Example \ref{Sweedler}).
Then $V$ is an $H$-module vertex algebra such that $g$ acts on $V$ as $\sigma$, and $x$ acts trivially on $V$.
\end{example}

\begin{example} \label{ex2-22}
Let $H$ be a Hopf algebra. Let $(A, \partial)$ be a commutative differential algebra.
Assume that $A$ is an $H$-module algebra such that the actions of $H$ and  $\partial$ on $A$ commute with each other.
Then $(A, Y^{(A,\partial)}(z), {\bf 1})$ is an $H$-module vertex algebra.
\end{example}

At the end of this section, we provide two  important  examples  of inner faithful actions.

\begin{proposition} \label{ex-inn-fai1}
Let $V$ be a vertex algebra and let $G$ be an automorphism group of $V$.
Then $V$ is an inner faithful $\C[G]$-module vertex algebras.
\end{proposition}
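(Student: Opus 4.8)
The goal is to show that for an automorphism group $G$ of a vertex algebra $V$, the group algebra $\C[G]$ acts inner faithfully on $V$, i.e. $IV \neq 0$ for every nonzero Hopf ideal $I$ of $\C[G]$. The plan is to recall the standard classification of Hopf ideals in a group algebra. For a group $G$, every Hopf ideal $I$ of $\C[G]$ is of the form $I = \omega(N)\C[G]$ where $N = \{g \in G \mid g \equiv 1 \bmod I\}$ is a normal subgroup of $G$ and $\omega(N)$ is the augmentation ideal of $\C[N]$; more concretely, $I$ is spanned by elements of the form $g(n - 1)$ with $g \in G$, $n \in N$, and $I$ is nonzero precisely when $N$ is nontrivial. (Equivalently, Hopf ideals of $\C[G]$ correspond bijectively to normal subgroups via $I \mapsto N$ and $N \mapsto \ker(\C[G] \to \C[G/N])$.)

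Granting this, I would argue as follows. Suppose $I$ is a nonzero Hopf ideal of $\C[G]$, and let $N$ be the associated nontrivial normal subgroup, so there is some $n \in N$ with $n \neq 1$ and $n - 1 \in I$. Since $n$ acts as a nontrivial automorphism of $V$, there exists $v \in V$ with $nv \neq v$, hence $(n-1)v \neq 0$. Therefore $IV \supseteq (n-1)V \ni (n-1)v \neq 0$, so $IV \neq 0$. As $I$ was an arbitrary nonzero Hopf ideal, $V$ is an inner faithful $\C[G]$-module, and combined with the fact (from the preceding examples) that $V$ is a $\C[G]$-module vertex algebra, this completes the proof.

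The only real content is the identification of Hopf ideals of $\C[G]$ with normal subgroups; everything else is immediate. I would either cite this (it is classical; see e.g. the references on Hopf algebras already in the bibliography) or include a short self-contained argument: if $I$ is a Hopf ideal, set $N = \{g \in G : g - 1 \in I\}$; using $\Delta(g) = g \otimes g$ one checks $N$ is a subgroup, using $IG \subseteq I$, $GI \subseteq I$ one checks $N$ is normal, and the condition $\Delta(I) \subseteq H \otimes I + I \otimes H$ together with the grading of $\C[G]$ by group elements forces $I$ to be spanned by $\{g(n-1) : g \in G,\ n \in N\}$; in particular $I \neq 0$ forces $N \neq \{1\}$.

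The main obstacle — really the only point requiring care — is making the last claim precise: that the comultiplication condition forces a Hopf ideal to be controlled by $N$ rather than being some larger subspace. This is where one uses that a nonzero element $\sum_g c_g g \in I$ with, say, $\sum_g c_g = 0$ (which follows from $\epsilon(I) = 0$) can be rewritten in terms of differences $g - g'$, and then an induction on the number of group elements in the support, using $\Delta$, pins down that those differences lie in $gI$ for group elements $g$ related by $N$. Since a clean reference is available, I would likely just invoke it and keep the proof to the two-line argument in the second paragraph.
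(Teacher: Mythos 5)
Your proposal is correct, and it reaches the same endgame as the paper: produce a difference of two distinct group elements inside the nonzero Hopf ideal $I$, then use that $G\subseteq \mathrm{Aut}(V)$ consists of \emph{distinct} automorphisms, so that difference acts nonzero on $V$. The route to that key element differs, though. You invoke (or sketch) the full classification of Hopf ideals of $\C[G]$ as augmentation ideals $\omega(N)\C[G]$ for normal subgroups $N\trianglelefteq G$, and extract $n-1\in I$ with $n\neq 1$. The paper instead argues directly and more economically: if the quotient map $f:\C[G]\to\C[G]/I$ were injective on $G$, the images $f(g)$ would be distinct nonzero group-like elements, hence linearly independent, so $I=\ker f$ could contain no nonzero element; therefore $f(g)=f(h)$ for some $g\neq h$, i.e.\ $g-h\in I$. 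This uses only the linear independence of distinct group-likes -- which is also the engine behind the classification you cite -- so the paper's argument is self-contained where yours outsources the hardest step to a classical result. Your version buys a slightly stronger structural picture (the normal subgroup $N$ controlling $I$), which the statement does not need; the part of your sketch arguing that $I\neq 0$ forces $N\neq\{1\}$ is exactly the point the paper's two-line linear-independence argument settles cleanly, so if you want a self-contained write-up, that is the argument to use rather than the induction on supports you outline.
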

\begin{proof}
Assume that the action is not inner faithful.
Then there exists a nonzero Hopf ideal $I$ of $H$ such that $IV=0$.
Let $f : \C[G] \to \C[G]/I$ be the canonical Hopf epimorphism.
We first claim that the restriction of $f$ to $G$, denoted by $f_{|G}$,  is not injective.

Assume that $f_{|G}$ is injective. Then for every $g \in G$, $f(g)$ is a nonzero group-like element in $\C[G]/I$.
Consequently, the set $\{f(g) \ | \ g \in G \}$ is linearly independent.
Now consider a nonzero element $\sum \lambda_i g_i \in I$, where $g_i \in G$, and $\lambda_i \in \C$ are all nonzero.
We have $ \sum \lambda_i f(g_i)=f(\sum \lambda_i g_i )=0 $, which is a contradiction.
Thus, $f_{|G}$ must be  non-injective.

Therefore, there exist distinct $g, h \in G$ such that $f(g)=f(h)$.
This implies that $g-h \in I$, and consequently $g=h$ on $V$.
This contradiction shows that  $H$ actions on $V$ must be  inner faithful. The proof is complete.
\end{proof}

\begin{remark}
If $V$ is $\pi_2$-injective and $G$ is a finite group,
we will further prove in Proposition \ref{G-faithful} that $V$ is a faithful $\C[G]$-module.
\end{remark}

\begin{proposition}
Let $V$ be a vertex algebra, and let $\mathfrak{g}$ be  a Lie subalgebra of $\text{Der}(V)$.
Then $V$ is an inner faithful  $U(\mathfrak{g})$-module vertex algebra.
\end{proposition}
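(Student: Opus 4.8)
The plan is to follow the template of the proof of Proposition~\ref{ex-inn-fai1}, with primitive elements playing the role that group-like elements played there. By Example~\ref{exp-der-action}, $U(\mathfrak g)$ acts on $V$ through the inclusion $\mathfrak g\hookrightarrow\text{Der}(V)\subseteq\text{End}_{\C}(V)$, so every nonzero $g\in\mathfrak g$ acts as a nonzero operator and $gV\neq0$. Hence it is enough to prove the following: \emph{every nonzero Hopf ideal $I$ of $U(\mathfrak g)$ satisfies $I\cap\mathfrak g\neq0$}. Indeed, picking $0\neq g\in I\cap\mathfrak g$ then gives $0\neq gV\subseteq IV$, so no nonzero Hopf ideal annihilates $V$ and $V$ is inner faithful.

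To establish the displayed statement I would use the standard filtration $\C=A_0\subseteq A_1=\C\oplus\mathfrak g\subseteq A_2\subseteq\cdots$ of $A:=U(\mathfrak g)$ (the PBW filtration, which coincides with the coradical filtration since we are in characteristic zero). It satisfies $\Delta(A_n)\subseteq\sum_{p+q=n}A_p\otimes A_q$, so that $\bar\Delta(a):=\Delta(a)-a\otimes1-1\otimes a\in A_{n-1}\otimes A_{n-1}$ for all $a\in A_n$; moreover $a\in A$ is primitive if and only if $a\in\mathfrak g$. Since $\epsilon(I)=0$ forces $I\cap A_0=0$, one may let $n\geq1$ be minimal with $I\cap A_n\neq0$ and pick $0\neq a\in I\cap A_n$; it then suffices to show that $a$ is primitive, for then $a\in\mathfrak g$.

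For the last point, choose a vector-space complement $C$ of $I$ in $A$ that is compatible with the filtration, in the sense that $C\cap A_k$ is a complement of $I\cap A_k$ in $A_k$ for every $k$ (construct $C$ inductively along the filtration; note $1\in C$ since $\epsilon(1)\neq0$). Then $C\otimes C$ is a complement of $A\otimes I+I\otimes A$ in $A\otimes A$. Since $a\in I$, $1\in C$, and $\Delta(a)\in A\otimes I+I\otimes A$ (because $I$ is a Hopf ideal), we get $\bar\Delta(a)\in A\otimes I+I\otimes A$; combining this with $\bar\Delta(a)\in A_{n-1}\otimes A_{n-1}$ and the compatibility of $C$ with the filtration shows that the $(C\cap A_{n-1})\otimes(C\cap A_{n-1})$-component of $\bar\Delta(a)$ vanishes, i.e. $\bar\Delta(a)\in(I\cap A_{n-1})\otimes A_{n-1}+A_{n-1}\otimes(I\cap A_{n-1})$. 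By minimality of $n$ this space is $0$, so $\bar\Delta(a)=0$ and $a$ is primitive, as needed.

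I expect the main obstacle to be this last filtration bookkeeping: one must choose the complement $C$ so as to be simultaneously compatible with the coalgebra ideal $I$ and with the PBW/coradical filtration, which is precisely what makes the intersection $(A\otimes I+I\otimes A)\cap(A_{n-1}\otimes A_{n-1})$ behave correctly. A cleaner but less elementary alternative is to invoke the Cartier--Kostant--Milnor--Moore theorem: the quotient $A/I$ is a cocommutative Hopf algebra generated by the primitives $\pi(\mathfrak g)$ (with $\pi\colon A\to A/I$ the projection), hence connected, hence $A/I\cong U(P(A/I))$; functoriality of $U$ then identifies $\pi$ with the map induced by the Lie algebra homomorphism $\mathfrak g\to P(A/I)$, so $I=A\cdot\mathfrak h$ for the Lie ideal $\mathfrak h=\ker(\mathfrak g\to P(A/I))$, and $I\neq0$ forces $\mathfrak h\neq0$, giving $0\neq\mathfrak h\subseteq I\cap\mathfrak g$.
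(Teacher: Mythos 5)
Your proof is correct, and it reaches the same pivotal fact as the paper's proof --- that any nonzero Hopf (indeed bialgebra) ideal $I$ of $U(\mathfrak g)$ meets $\mathfrak g$ nontrivially, whence a nonzero derivation in $I$ cannot annihilate $V$ --- but it gets there by a different and more self-contained route. The paper argues by contradiction at the level of the quotient: if the canonical map $f\colon U(\mathfrak g)\to U(\mathfrak g)/I$ were injective on $\mathfrak g$, then $f$ would factor as a composite of two Hopf algebra isomorphisms $U(\mathfrak g)\to U(\overline{\mathfrak g})\to U(\mathfrak g)/I$ (the second supplied by the structure theory of connected cocommutative Hopf algebras cited from Montgomery), forcing $I=0$; this is essentially the Cartier--Kostant--Milnor--Moore argument you sketch as your ``less elementary alternative.'' Your primary argument instead exhibits a nonzero primitive element of $I$ directly: taking $a\in I$ of minimal PBW-filtration degree and using a filtration-compatible complement of $I$ to intersect $A\otimes I+I\otimes A$ with $A_{n-1}\otimes A_{n-1}$, you force $\bar\Delta(a)=0$, and in characteristic zero primitives of $U(\mathfrak g)$ lie in $\mathfrak g$. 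This buys independence from the structure theorem (only the bialgebra-ideal conditions and PBW are used), at the cost of the filtration bookkeeping you correctly identify as the delicate point; the paper's version is shorter but outsources the key step to a cited structural fact. Both are valid; note also that the paper's conclusion ``there exist $g\neq h$ in $\mathfrak g$ with $g-h\in I$'' is exactly your ``$I\cap\mathfrak g\neq 0$,'' so the final contradiction is identical in the two proofs.
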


\begin{proof}
Assume that the action is not inner faithful.
We can find a nonzero  Hopf ideal  $I$ of $U(\mathfrak{g})$ such that $IV=0$.
Let $f : U(\mathfrak{g}) \to U(\mathfrak{g})/I$ be the canonical Hopf epimorphism.
We claim that the restriction of $f$ to $\mathfrak{g}$, denoted by $f_{|\mathfrak{g}}$,  is not injective.

Suppose, for contradiction, that $f_{|\mathfrak{g}}$ is injective.
Then, $\overline{\mathfrak{g}}=f(\mathfrak{g})$ is a Lie algebra isomorphic to $\mathfrak{g}$.
Consequently, the Lie algebra isomorphism $f: \mathfrak{g} \to  \overline{\mathfrak{g}}$ can be extended to an  isomorphism of Hopf algebra
$F: U(\mathfrak{g}) \to   U(\overline{\mathfrak{g}})$.

On the other hand, since $U(\mathfrak{g})/I$  is generated by $\overline{\mathfrak{g}}$,
we can deduce from \cite{M} that
the inclusion of $\overline{\mathfrak{g}}$ into $U(\mathfrak{g})/I$ can be extended to an isomorphism of Hopf algebras
$\Phi: U(\overline{\mathfrak{g}}) \to U(\mathfrak{g})/I$.
However, we now have $\Phi F (I)=0$, which contradicts the fact that $F$ and $\Phi$ are isomorphic.


Thus, our assumption that  $f_{|\mathfrak{g}}$  is injective must be false,
and there exist  $g, h \in \mathfrak{g}$ with $g \neq h$ such that $g-h \in I$.
Consequently, $(g-h)V=0$, which implies that $g=h$ on $V$.
This contradiction shows that $ U(\mathfrak{g})$ actions on $V$  must be inner faithful. The proof is complete.

\end{proof}

\section{$\pi_2$-injective vertex algebras }

In this section, we introduce $\pi_2$-injective vertex algebras and provide numerous examples of such algebras.
We would like to emphasize that $\pi_2$-injective vertex algebras play a crucial role in the next section.

For $n \geq 1$, let
$$\pi^V_n: V^{\otimes n} \rightarrow  V((z_1)) \cdots ((z_{n-1})) $$
be the linear map defined by
$$\pi^V_n (v^1 \otimes \cdots \otimes v^{n-1} \otimes v^{n}  )=Y(v^1, z_1) \cdots Y(v^{n-1},z_{n-1})v^n$$
for $v^1, \cdots, v^n \in V.$

\begin{definition}
A vertex algebra $V$ is said to be {\em $\pi_2$-injective } if the map $\pi^V_2$ is injective.
\end{definition}

Before presenting examples of $\pi_2$-injective vertex algebras,
we first introduce a good property of such algebras.
Further properties of $\pi_2$-injective vertex algebras will be given in the next section.

\begin{proposition} \label{G-faithful}
Let $V$ be a vertex algebra such that $\pi^V_2$ is injective.
Let $G$ be a finite automorphism group of $V$.  Then
\begin{enumerate}[{(1)}]
\item The map $\pi^V_n$ is injective for any $n \geq 2$.

\item Every irreducible representation of $G$ appears in $V$.
\end{enumerate}
\end{proposition}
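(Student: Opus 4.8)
The plan is to prove the two statements in order, using $\pi_2$-injectivity together with the associativity/commutativity features of vertex algebras encoded in the Jacobi identity, and for part (2) an averaging (Reynolds operator) argument.

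\textbf{Part (1).} I would argue by induction on $n$, the base case $n=2$ being the hypothesis. For the inductive step, suppose $\pi^V_n$ is injective and consider an element $\sum_j v^1_j \otimes \cdots \otimes v^{n}_j \otimes v^{n+1}_j$ of $V^{\otimes(n+1)}$ that is sent to $0$ by $\pi^V_{n+1}$, i.e.
\[
\sum_j Y(v^1_j,z_1)\cdots Y(v^n_j,z_n)Y(v^{n+1}_j,z_{n+1})v^{n+2}_j \;\overset{\text{(with }v^{n+2}_j={\bf 1}\text{ or general)}}{=}\;0 .
\]
Wait --- I should be careful: $\pi^V_{n+1}$ acts on $V^{\otimes(n+1)}$ and lands in $V((z_1))\cdots((z_n))$, so the last tensor factor plays the role of the ``vector being acted on.'' The key is to reduce the number of vertex operators by one. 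The natural move is to use the creation/skew-symmetry property: since by Proposition \ref{ED-der}(3) one has $Y(u,z_{n-1})v = \lim$-type relations allowing one to move a vertex operator, the cleanest route is to first reduce to the case $v^{n+1}=\1$: indeed $Y(v^{n},z_n)\1 = e^{z_n\mathcal{D}}v^n \in V[[z_n]]$, and more generally composing with $Y(\cdot,z_{n+1})$ and setting the innermost vector to $\1$ lets one absorb one operator. Concretely, I would show that injectivity of $\pi^V_{n+1}$ on all of $V^{\otimes(n+1)}$ is equivalent to injectivity of the map $v^1\otimes\cdots\otimes v^n \mapsto Y(v^1,z_1)\cdots Y(v^n,z_n)\1$, and then use that applying $Y(v^0,z_0)$ on the left to an expression of the form $Y(v^1,z_1)\cdots Y(v^n,z_n)\1$ and expanding via the iterate formula/Jacobi identity recovers $\pi^V_{n+1}$ up to a change of variables that is injective on the relevant space of formal series. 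The inductive hypothesis then kills the extra operator. I expect this bookkeeping with the domains $V((z_1))\cdots((z_{n-1}))$ and the substitutions $z_i \mapsto z_i + z_{i+1}$ to be the main technical obstacle: one must check that the relevant substitution maps on spaces of iterated Laurent series are injective, which is where $\pi_2$-injectivity (rather than mere faithfulness of the module structure) is genuinely used.

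\textbf{Part (2).} Fix an irreducible $G$-module $W$ with character $\chi_W$ and let $e_W = \frac{\dim W}{|G|}\sum_{g\in G}\overline{\chi_W(g)}\,g \in \C[G]$ be the corresponding central idempotent; the isotypic component of $W$ in $V$ is $e_W V$, so I must show $e_W V \neq 0$. Suppose for contradiction that $e_W V = 0$. Then for all $v^1,\dots,v^n \in V$ and all $g\in G$,
\[
\frac{\dim W}{|G|}\sum_{g\in G}\overline{\chi_W(g)}\; g\big(Y(v^1,z_1)\cdots Y(v^{n-1},z_{n-1})v^n\big)=0,
\]
and since each $g$ is an automorphism, $g\big(Y(v^1,z_1)\cdots v^n\big) = Y(gv^1,z_1)\cdots Y(gv^{n-1},z_{n-1})gv^n = \pi^V_n(g^{\otimes n}(v^1\otimes\cdots\otimes v^n))$. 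Hence $e_W V = 0$ forces $\pi^V_n\big(\sum_{g}\overline{\chi_W(g)}\,g^{\otimes n}(v^1\otimes\cdots\otimes v^n)\big)=0$ for all choices, i.e. $\pi^V_n$ annihilates the image of the operator $T_W := \sum_{g\in G}\overline{\chi_W(g)}\,g^{\otimes n}$ acting on $V^{\otimes n}$. By part (1), $\pi^V_n$ is injective, so $T_W$ must be the zero operator on $V^{\otimes n}$ for all $n\geq 2$. But $T_W$ acting on $V^{\otimes n}$ is, up to scalar, the projection onto a sum of isotypic components for the $G$-action on $V^{\otimes n}$ (the diagonal action), and for $n$ large it is standard that $W$ itself occurs as a constituent of $V^{\otimes n}$ provided every irreducible occurs in $V$ --- which is circular. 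So instead I would take $n=2$ directly: $e_W V = 0$ and injectivity of $\pi^V_2$ give that the diagonal $G$-action on $V\otimes V$ has no vector on which $\sum_g \overline{\chi_W(g)} g\otimes g$ is nonzero; equivalently $W$ does not occur in $V\otimes V$. Iterating, $W$ occurs in no $V^{\otimes n}$. The contradiction comes from the vacuum: $\1 \in V^G$ is a trivial submodule, and if $V$ itself contained \emph{no} trivial-isotypic complement issues --- here I would instead invoke that $V$ is faithful as a $\C[G]$-module. Indeed Proposition \ref{ex-inn-fai1} gives inner faithfulness, and the Remark announces that under $\pi_2$-injectivity $V$ is actually faithful; a faithful $\C[G]$-module for a \emph{finite} group $G$ must contain every irreducible representation, since the annihilator of the sum of all isotypic components missing some $W$ is a nonzero ideal of $\C[G]$ acting as zero, contradicting faithfulness. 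So the real structure of the argument is: (i) prove part (1); (ii) use part (1) plus the averaging identity above to upgrade inner faithfulness to genuine faithfulness of the $\C[G]$-action; (iii) conclude by the elementary fact that a faithful module over the semisimple algebra $\C[G]$ contains all simples. The main obstacle here is step (ii) --- showing that $g\mapsto g|_V$ is injective on $G$ forces, via $\pi_2$-injectivity, the stronger statement that the induced representation $\C[G]\to\operatorname{End}(V)$ is injective; this is exactly where one feeds a putative element of the kernel into $\pi^V_2$ and uses injectivity to derive a contradiction.
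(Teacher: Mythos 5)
Both halves of your argument stall at precisely the steps you label as obstacles, so as written this is a plan with two genuine gaps rather than a proof.

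For part (1), your inductive step goes in the wrong direction and rests on an unjustified equivalence. You propose to reduce to expressions anchored at the vacuum and assert that injectivity of $\pi^V_{n+1}$ is equivalent to injectivity of $v^1\otimes\cdots\otimes v^n\mapsto Y(v^1,z_1)\cdots Y(v^n,z_n)\1$. Only one direction is clear (restrict the last tensor slot to $\1$); for the converse, knowing $\sum_j Y(v^1_j,z_1)\cdots Y(v^n_j,z_n)v^{n+1}_j=0$ only controls the $z_{n+1}^0$ coefficient of $\sum_j Y(v^1_j,z_1)\cdots Y(v^n_j,z_n)Y(v^{n+1}_j,z_{n+1})\1$, not the whole series, so the vacuum-anchored map's injectivity cannot be applied. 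This is essentially the gap between $\pi_2$-injectivity and nondegeneracy in the sense of the maps $Z^V_n$, which the paper treats as genuinely different conditions. The paper's induction avoids all of this by peeling off the \emph{outermost} operator: write a kernel element as $\sum_i v^i\otimes\alpha_i$ with the $v^i$ linearly independent and $\alpha_i\in V^{\otimes(n-1)}$, so that $\sum_i Y(v^i,z_1)\pi^V_{n-1}(\alpha_i)=0$; extracting the coefficient of each monomial $z_2^{m_2}\cdots z_{n-1}^{m_{n-1}}$ produces elements $\sum_i v^i\otimes c_i\in\ker\pi^V_2$, whence every $c_i=0$, so $\pi^V_{n-1}(\alpha_i)=0$ and induction finishes. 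No skew-symmetry, no substitutions $z_i\mapsto z_i+z_{i+1}$, no vacuum reduction.

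For part (2), you open two workable arguments and complete neither. Your averaging computation does correctly show, via $G$-equivariance of $\pi^V_n$ and part (1), that if $W$ does not occur in $V$ then $W$ occurs in no $V^{\otimes n}$; the contradiction you then need is the Burnside--Brauer theorem applied to a finite-dimensional faithful $G$-submodule of $V$ (which exists since $V$ is a sum of finite-dimensional irreducibles and $G\le\operatorname{Aut}(V)$ acts faithfully as a group) --- this is not circular, and it is essentially the argument of \cite{DRY} that the paper invokes. Your fallback route --- upgrading inner faithfulness to faithfulness of the algebra map $\C[G]\to\operatorname{End}_{\C}(V)$ and then using semisimplicity of $\C[G]$ --- is also viable, but you justify the key step only by citing the Remark after Proposition \ref{ex-inn-fai1}, which merely \emph{announces} that this faithfulness will be proved in the present proposition, and by saying one "feeds a putative element of the kernel into $\pi^V_2$." Feeding $a=\sum_g\lambda_g g$ with $aV=0$ into $\pi^V_2$ yields only that $\sum_g\lambda_g\,g\otimes g$ annihilates $V\otimes V$; to conclude $a=0$ one still needs either the linear independence of the distinct operators $g|_V$ in $\operatorname{End}_{\C}(V)$ (which is not automatic) or the full bialgebra-ideal argument of Theorem \ref{thm-Hopf-ideal} combined with Proposition \ref{ex-inn-fai1}. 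Either completion is fine, but one of them has to actually be carried out.
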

\begin{proof}
(1)  The proof is essentially  the same as that of  \cite[Lemma 5.1]{DRY}.
 We prove that $\pi^V_n$ is injective by induction on $n$.
Let $n \geq 2$ and assume that $v^1 \otimes \alpha_1+ \cdots + v^s \otimes \alpha_s \in \text{Ker}(\pi^V_n)$,
where $v^1, \cdots , v^s\in V$ are linearly independent and $\alpha_1, \cdots , \alpha_s \in V^{\otimes (n-1)}.$
Then, we have $$Y(v^1,z_1)\pi_{n-1}(\alpha_1)+ \cdots + Y(v^s, z_1) \pi_{n-1}(\alpha_s)=0.$$
Note from the definition that each $\pi_{n-1}(\alpha_i)\in V((z_2)) \cdots ((z_{n-1})).$ 
Since the map $\pi^V_2$ is injective, we deduce that $\pi_{n-1}(\alpha_1)= \cdots = \pi_{n-1}(\alpha_s)=0$
as the coefficients of each $z_2^{m_2}\cdots z_{n-1}^{m_{n-1}}$ in the equation above is zero for all $(m_2, \cdots ,m_{n-1})\in \Z^{n-2}.$
By induction, we conclude that  $\alpha_1= \cdots = \alpha_s=0$. Therefore, $\text{Ker}(\pi^V_n)=\{0\}$, as required.

(2) Since $\pi^V_n$ is injective for any $n \geq 1$,
the proof follows a similar argument as in \cite[Corollary 5.2]{DRY}.

\end{proof}

\begin{proposition} \label{pi2inj-exmples}
Let $V$ be a simple vertex algebra of countable dimension. Then the map $\pi^V_2$ is injective.
In particular, every irreducible vertex algebra of countable dimension is $\pi_2$-injective.

\end{proposition}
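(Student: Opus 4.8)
The plan is to show that for a simple vertex algebra $V$ of countable dimension, the kernel of $\pi^V_2$ is zero by combining a dimension-counting argument with the simplicity hypothesis. First I would observe that $\text{Ker}(\pi^V_2)$ is a subspace of $V \otimes V$; the task is to show it vanishes. The natural strategy is to fix a nonzero element $\xi = \sum_{i=1}^s u^i \otimes v^i \in \text{Ker}(\pi^V_2)$ with $u^1, \dots, u^s$ linearly independent and $s$ minimal, and derive a contradiction. Writing out $\pi^V_2(\xi) = \sum_i Y(u^i, z_1) v^i = 0$ as an identity in $V((z_1))$, and then using the action of the modes $a_n$ for $a \in V$ together with the Jacobi identity (in the form of the iterate/commutator formulas), one wants to produce from $\xi$ a new element of $\text{Ker}(\pi^V_2)$ of the same form but with the first tensor leg spanning a $\mathcal{D}$-stable submodule — and then invoke simplicity.

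More concretely, here is the skeleton I would follow. For $a \in V$ and $m \in \Z$, acting by $a_m \otimes 1$ in an appropriate sense: from $\sum_i Y(u^i,z_1)v^i = 0$ one derives, using the commutator formula $[a_m, Y(u^i,z_1)] = \sum_{j \geq 0}\binom{m}{j} Y(a_j u^i, z_1) z_1^{m-j}$, that $\sum_i \big(\sum_{j\ge 0}\binom{m}{j} z_1^{m-j} Y(a_j u^i, z_1)\big) v^i + \sum_i Y(u^i,z_1)\, a_m v^i = 0$. The idea is that the span $W = \langle u^1, \dots, u^s\rangle$ must then interact with $V$ in a controlled way: by minimality of $s$ and linear independence, manipulating these relations should force $W$ (or rather the submodule it generates) to be a proper $\mathcal{D}$-stable $V$-submodule of $V$ unless $\xi = 0$. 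The countable-dimensionality enters to guarantee that certain "generic" specializations of the formal variable $z_1$ are available — over $\C$ one can choose a complex number avoiding a countable set of bad values, so that $Y(u^i, z_1)$ evaluated (in a suitable completion or via a linear functional) behaves like a genuine family of operators and linear independence of the $u^i$ translates into linear independence of the operators $Y(u^i, \cdot)$. This is the standard device (due to Li, and used in \cite{EK, Li1, DRY}) for passing between formal and analytic statements.

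The cleanest route is probably to reduce to the known statement that a simple vertex algebra of countable dimension is \emph{nondegenerate} in the sense of \cite{EK, Li1} (this is essentially Li's theorem), and then show nondegeneracy implies $\pi_2$-injectivity. Recall nondegeneracy says: if $\sum_i Y(u^i, z_1) v^i = 0$ with the $u^i$ linearly independent, then all $v^i = 0$ — which, on its face, is almost exactly $\pi_2$-injectivity, the only gap being the direction of the dependence. So I would: (i) cite or reprove that simple + countable dimension $\Rightarrow$ nondegenerate, via the functional-analytic argument sketched above using a generic point of $\C$ and simplicity to kill a would-be proper ideal; (ii) deduce $\pi_2$-injectivity directly, since $\pi^V_2(\sum_i u^i \otimes v^i) = 0$ with $u^i$ linearly independent forces $v^i = 0$ by nondegeneracy, hence the element is zero; (iii) note the "in particular" clause is immediate because an irreducible vertex algebra is in particular simple (a $\mathcal{D}$-stable $V$-submodule is a $V$-submodule).

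\textbf{Main obstacle.} The crux is step (i): establishing that simplicity plus countable dimension yields the nondegeneracy/injectivity property. The subtlety is that $V$ carries no grading, so one cannot argue degree-by-degree; instead one must run the generic-point argument, where the countability hypothesis is used exactly once but essentially — to find a scalar specialization of $z_1$ at which a countable family of meromorphic-type expressions is simultaneously well-behaved and at which linear independence is preserved. Turning the resulting vanishing into a \emph{proper $\mathcal{D}$-stable ideal} of $V$, so that simplicity applies, requires care: one must check the candidate subspace is closed under all modes $a_n$ and under $\mathcal{D}$, using the Jacobi identity and Proposition \ref{ED-der}. I expect this bookkeeping — verifying ideal-closure of the span built from the $u^i$ (or the $v^i$) — to be where the real work lies; everything after nondegeneracy is formal.
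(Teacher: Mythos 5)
Your preferred route (i) rests on a false implication: a simple vertex algebra of countable dimension need not be nondegenerate, so you cannot reduce $\pi_2$-injectivity to Li's nondegeneracy theorem. Li's theorem is for \emph{irreducible} vertex algebras of countable dimension, and the paper's remark following the definition of nondegeneracy explicitly records that simplicity does not suffice. A concrete counterexample already appears in the paper: $(\C[x],\tfrac{d}{dx})$ is simple, of countable dimension, and $\pi_2$-injective by Proposition \ref{pi-ex1}, yet $Z^V_2$ annihilates the nonzero element $x\otimes 1\otimes 1-1\otimes x\otimes 1+1\otimes 1\otimes(z_2-z_1)$, since $(x+z_1)-(x+z_2)+(z_2-z_1)=0$; hence it is degenerate. (You have also slightly misstated nondegeneracy: the definition is injectivity of the maps $Z^V_n$, which involve $Y(v^1,z_1)\cdots Y(v^n,z_n){\bf 1}$, not the condition you quote, which is essentially $\pi_2$-injectivity itself.) So the reduction ``simple $+$ countable dimension $\Rightarrow$ nondegenerate $\Rightarrow$ $\pi_2$-injective'' can at best yield the ``in particular'' clause, not the main statement. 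Your first sketch is closer in spirit but stops short of the decisive mechanism: acting by $a_m$ via the commutator formula contaminates the second tensor leg with the terms $Y(u^i,z_1)a_m v^i$, and ``forcing $W$ to generate a proper $\mathcal{D}$-stable submodule'' is not how simplicity actually enters.

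The paper's argument runs as follows. From $\sum_i Y(v^i,z)w^i=0$, weak associativity gives $\sum_i Y(Y(u,z_0)v^i,z)w^i=0$ for every $u\in V$ (no contamination of the second leg), and differentiating in $z$ gives $\sum_i Y(\mathcal{D}v^i,z)w^i=0$; hence $\sum_i Y(av^i,z)w^i=0$ for every $a$ in the associative subalgebra $A(V,\mathcal{D})\subseteq \text{End}(V)$ generated by $\mathcal{D}$ and all modes $u_n$. Simplicity of $V$ is precisely irreducibility of $V$ as an $A(V,\mathcal{D})$-module, and countable dimension enters to give $\text{End}_{A(V,\mathcal{D})}(V)=\C$ (Dixmier--Schur), so the Jacobson density theorem produces $a$ with $av^1={\bf 1}$ and $av^i=0$ for $i\neq 1$, whence $Y({\bf 1},z)w^1=w^1=0$, a contradiction. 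Note that countability is used through the density theorem, not through a generic specialization of $z_1$ as you propose. To repair your argument you would need to supply this density step (or an equivalent), isolating the action on the first tensor factor via weak associativity rather than the commutator formula.
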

\begin{proof}

Suppose that the linear map $\pi^V_2$ is not injective.
Then there exists  a nonzero vector
$v^1 \otimes w^1 + \cdots v^s \otimes w^s$
in the kernel of $\pi^V_2$,
where $s$ is a positive integer, $v^1, \cdots, v^s$ are linearly independent vectors in $V$,
and $w^1 , \cdots , w^s$ are nonzero vectors in $V$.
Then we have $$Y(v^1, z)w^1+ \cdots + Y(v^s,z)w^s=0.$$
By weak associativity, for any $u \in V$, there exists some $k \in \N$ such that
\begin{align*}
& (z+z_0)^k(Y(Y(u, z_0)v^1, z)w^1 + \cdots + Y(Y(u, z_0)v^s, z)w^s)\\
&=(z_0+z)^k(Y(u, z_0+z)Y(v^1,z)w^1 + \cdots + Y(u, z_0+z)Y(v^s,z)w^s)=0,\\
\end{align*}
which implies that
\begin{align*}
Y(Y(u, z_0)v^1, z)w^1 + \cdots + Y(Y(u, z_0)v^s, z)w^s=0.
\end{align*}
On the other hand, we have
\begin{align*}
& Y(\mathcal{D} v^1, z)w^1+ \cdots + Y(\mathcal{D} v^s,z)w^s \\
& =\frac{d}{dz}(Y(v^1, z)w^1+ \cdots + Y(v^s,z)w^s)=0.
\end{align*}

To continue the proof, we consider the associative subalgebra $A(V, \mathcal{D})$  of $\text{End}(V)$,
which is generated by the operators $\mathcal{D}$ and $u_n$, where $u$ belongs to $V$ and $n$ belongs to $\Z$.
Therefore, for any $a \in A(V, \mathcal{D})$, we have
$$Y(av^1, z)w^1+ \cdots + Y(av^s,z)w^s=0.$$
Note that $V$ is a simple vertex algebra if and only if $V$ is an irreducible $A(V, \mathcal{D})$-module.
Hence by Jacobson density theorem there exists an $a \in R$ such that
$av^1={\bf 1}$ and $av^i=0$ for any $i \neq 1$.
It follows that $Y({\bf 1}, z)w^1=0$, which is a contradiction.
Hence $\pi^V_2$ is injective and the proof is complete.
\end{proof}

\begin{remark}
When $V$ is a simple vertex operator algebra,
the $\pi_2$-injectivity of $V$ directly follows from \cite[Lemma 3.1]{DM}.
Similarly, when $V$ is an irreducible vertex algebra of countable dimension, the $\pi_2$-injectivity of $V$ can be found in \cite{DRY,Li1}.

\end{remark}


\begin{proposition} \label{pi-ex1}
For any non-negative integer $m$, the commutative vertex algebra
$$(V=\C[x], \ \partial=x^m\frac{d}{dx})$$ is $\pi_2$-injective.
\end{proposition}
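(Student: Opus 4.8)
The plan is to identify the commutative vertex algebra $(V=\C[x],\ \partial=x^m\frac{d}{dx})$ with its realization $(\C[x], Y^{(V,\partial)}(z), 1)$ and to compute the map $\pi^V_2$ explicitly. By definition, for $f,g\in\C[x]$ we have $\pi^V_2(f\otimes g)=Y^{(V,\partial)}(f,z)g=(e^{z\partial}f)\,g=\sum_{n\ge 0}\frac{z^n}{n!}(\partial^n f)\,g$, an element of $\C[x][[z]]\subseteq\C[x]((z))$. So I must show: if $f^1,\dots,f^s\in\C[x]$ are linearly independent and $g^1,\dots,g^s\in\C[x]$ are nonzero with $\sum_{i=1}^s (e^{z\partial}f^i)\,g^i=0$ in $\C[x][[z]]$, we reach a contradiction. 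Equivalently, $\sum_i (\partial^n f^i)\,g^i=0$ in $\C[x]$ for every $n\ge 0$.

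First I would handle the degenerate case $m=0$ (where $\partial=\frac{d}{dx}$), which is already covered by Proposition \ref{pi2inj-exmples} since $(\C[x],\frac{d}{dx})$ is simple; alternatively it follows from the general argument below. For $m\ge 1$, the key structural observation is that $\partial=x^m\frac{d}{dx}$ has $1$ in its kernel and, more importantly, is "locally nilpotent in a graded sense": writing things in terms of the monomial basis, $\partial$ raises $x$-degree by $m-1$. The cleanest approach is to look at the coefficient of the lowest power of $x$. Among all the $g^i$, let $d$ be the minimal $x$-adic order (minimal degree of a monomial appearing), and among those $g^i$ of order $d$ consider the corresponding $f^i$. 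The $n=0$ equation is $\sum_i f^i g^i=0$; the order-$d$ homogeneous component gives a linear dependence among the $f^i$ weighted by the leading coefficients of those $g^i$ of order $d$ — but this does not immediately contradict linear independence of the $f^i$ since several $f^i$ may combine. So instead I would argue by a descent/degree argument on $f^i$.

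The robust route: set $W=\mathrm{span}_\C\{f^1,\dots,f^s\}$, a finite-dimensional subspace of $\C[x]$. Pick a basis of $W$ and use it to rewrite the relation so that the coefficient polynomials $g^i$ attached to basis elements of $W$ are the relevant ones; linear independence of the $f^i$ means precisely that if we express everything in a basis of $W$, the resulting coefficients (which are $\C$-linear combinations of the $g^i$) cannot all vanish — in fact at least one must be nonzero. So WLOG (after a change of basis in the index set) I may assume the $f^i$ are actually a basis of $W$, and I need the weaker-looking but now tractable statement: $\sum_i (e^{z\partial}f^i)\,g^i=0$ with $f^i$ linearly independent forces all $g^i=0$. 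Now comes the main idea: consider the $\C[x]$-module map $\C[x]^{\oplus s}\to\C[x][[z]]$ and extract, for each $n$, the relation $\sum_i(\partial^n f^i)g^i=0$. Since $\deg(\partial^n f^i)$ grows with $n$ (for $m\ge 2$ it strictly grows once $f^i\neq$ const; for $m=1$, $\partial=x\frac{d}{dx}$ preserves each monomial's degree and acts on $x^k$ as multiplication by $k$), one gets a Vandermonde-type or degree-comparison obstruction. For $m=1$ the operator $\partial$ acts diagonally on monomials with distinct eigenvalues on distinct degrees, so $e^{z\partial}x^k=e^{kz}x^k$ and the relation becomes $\sum_i \sum_k e^{kz}(\text{coeff})\,g^i=0$; grouping by the exponential $e^{kz}$ (these are linearly independent functions of $z$) collapses each degree separately and forces, via linear independence of the $f^i$, all $g^i=0$.

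The hard part will be the case $m\ge 2$, where $\partial=x^m\frac{d}{dx}$ is neither locally nilpotent nor diagonalizable on $\C[x]$, and $e^{z\partial}$ is genuinely a nontrivial substitution: indeed $e^{z\partial}$ acts on $x$ as $x\mapsto x(1-(m-1)zx^{m-1})^{-1/(m-1)}$, the flow of the vector field $x^m\partial_x$. I expect the right move is to pass to the function-field picture: $e^{z\partial}f = f\circ\phi_z$ where $\phi_z(x)=x(1-(m-1)zx^{m-1})^{-1/(m-1)}$ is an invertible substitution (a unit times $x$ in $\C[[x]][[z]]$), so the relation $\sum_i (f^i\circ\phi_z)\,g^i=0$ can be rewritten, after substituting $x\mapsto\phi_z^{-1}(x)$, as $\sum_i f^i(x)\,(g^i\circ\phi_z^{-1})=0$ — a relation in $\C[[x]]((z))$ or an appropriate completion. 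Since the $f^i$ are linearly independent elements of $\C[x]$ and the $g^i\circ\phi_z^{-1}$ are power series in $x$ (times rational functions of $z$), one can then extract coefficients of a suitable monomial $x^j$ (choosing $j$ large enough to see all the $f^i$, using that finitely many linearly independent polynomials remain linearly independent after truncating at high enough degree) to force each $g^i\circ\phi_z^{-1}=0$, hence each $g^i=0$, contradicting $g^i\neq 0$. Making the substitution-of-variables step rigorous in the ring $\C((z))[[x]]$ (checking $\phi_z$ is invertible there and that the pairing with the dual basis of $W$ detects nonvanishing) is the technical crux; everything else is bookkeeping.
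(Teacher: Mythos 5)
Your reductions for $m=0$ (via Proposition \ref{pi2inj-exmples}, since $(\C[x],\frac{d}{dx})$ is simple of countable dimension) and for $m=1$ (diagonalizing $\partial=x\frac{d}{dx}$, so $e^{z\partial}x^k=e^{kz}x^k$, separating the exponentials by a Vandermonde argument in $z$, and using that the coefficient matrix of the linearly independent $f^i$ has full rank) are sound. The problem is the case $m\ge 2$, which is the actual content of the proposition, and there your outline has a genuine gap rather than a technical loose end.

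Two points. First, the substitution step is circular: $\phi_z$ is the formal flow of $\partial$, so its compositional inverse is $\phi_{-z}$ and $g^i\circ\phi_z^{-1}=e^{-z\partial}g^i$. Substituting $x\mapsto\phi_z^{-1}(x)$ is therefore nothing but applying the algebra automorphism $e^{-z\partial}$ of $\C[[z]][[x]]$ to the relation, turning $\sum_i(e^{z\partial}f^i)\,g^i=0$ into $\sum_i f^i\,(e^{-z\partial}g^i)=0$; this is just the skew-symmetry of Proposition \ref{ED-der}(2) with the roles of the two tensor factors exchanged, so it carries exactly the same information as the hypothesis and no progress has been made. Second, and more seriously, the concluding deduction is invalid as stated: from $\sum_i f^i h^i=0$ with $f^1,\dots,f^s\in\C[x]$ linearly independent over $\C$ and $h^i\in\C[[z]][[x]]$ one cannot conclude $h^i=0$, because $\C$-linear independence is not linear independence over the larger ring (already $1\cdot x+x\cdot(-1)=0$). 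Extracting the coefficient of a single monomial $x^j$ mixes the various $x$-coefficients of all the $h^i$ and does not isolate any of them, so ``choosing $j$ large enough'' does not rescue the step; one must use the special structure of $h^i=e^{-z\partial}g^i$ with $g^i\in\C[x]$, which is precisely the statement to be proven. By contrast, the paper's proof extracts the full system $\sum_i(\partial^k f^i)\,g^i=0$ for all $k\ge 0$, normalizes $f^i=x^{n_i}+\widetilde{f^i}$, $g^i=\lambda_i x^{m_i}+\widetilde{g^i}$, compares top-degree terms, and kills the leading coefficients $\lambda_i$ by a Vandermonde determinant in the exponents $n_i$ (using $\partial^k x^{n}=n(n+m-1)\cdots(n+(k-1)(m-1))x^{n+k(m-1)}$), then descends to the lower-degree parts. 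Some degree/Vandermonde (or eigenvalue-type) input of this kind is exactly what your plan for $m\ge 2$ is missing.
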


\begin{proof}

Let $$f^1 \otimes g^1 + \cdots + f^s \otimes g^s \in \text{Ker}(\pi^V_2),$$
where $s$ is a positive integer and $f^1, g^1, \cdots, f^s, g^s \in \C[x].$
From the definition of $\pi^V_2$ and $Y^{(V, \partial)}(z)$, we have
\begin{align} \label{eq3-1}
(e^{z\partial }f^1)g^1+ \cdots +(e^{z\partial }f^s)g^s=0.
\end{align}
By comparing the coefficients of $z^k$ on both sides of equation (\ref{eq3-1}), we obtain
\begin{align} \label{eq3-2}
(\mathcal{\partial}^kf^1)g^1+ \cdots + (\mathcal{\partial}^kf^s)g^s=0
\end{align}
for any $k \geq 0$.

For each $i$, we can write $f^i=x^{n_i}+\widetilde{f^i}$ and $g^i= \lambda_i x^{m_i} + \widetilde{g^i}$,
where $\lambda_i$ is a non-zero complex number, $n_i, m_i$ are non-negative integers,
and $\widetilde{f^i}$ (resp. $\widetilde{g^i}$) is the sum of monomials in $f^i$ (resp. $g^i$)
whose degree is less than $n_i$ (resp. $m_i$).
For any $k \geq 0$, by comparing the highest degree terms of
$(\partial^kf^1)g^1,  \cdots, (\partial^kf^s)g^s $, we obtain
$$x^{n_1} \otimes \lambda_1 x^{m_1} + \cdots + x^{n_s} \otimes \lambda_s x^{m_s}  \in \text{Ker}(\pi^V_2).$$

We claim that $x^{n_1} \otimes \lambda_1 x^{m_1} + \cdots + x^{n_s} \otimes \lambda_s x^{m_s}=0.$
Without loss of generality, we can assume that
$s \geq2$, $n_1+m_1 = \cdots =n_s+m_s$, and $n_1>n_2> \cdots > n_s \geq 0$.
By Equation (\ref{eq3-2}), we have
\begin{align*}
& \lambda_1+\lambda_2+ \cdots +\lambda_s=0, \\
& n_1 \lambda_1 +n_2 \lambda_2 + \cdots + n_s \lambda_s=0,\\
& [n_1, 1]\lambda_1 + [n_2,1] \lambda_2 + \cdots + [n_s,1] \lambda_s=0,\\
& \cdots   ~~~~~~~~~~~~~~~~~~~~ \cdots ~~~~~~~~~~~~~~~~~~~ \cdots \\
&[n_1, s-2]\lambda_1 + [n_2,s-2] \lambda_2 + \cdots + [n_s,s-2] \lambda_s=0,
\end{align*}
where $[n_i, s]$  represents the product $n_i(n_i+m-1) \cdots (n_i+s(m-1))$.
A simple calculation shows that
\[
\text{det}
\begin{pmatrix}
1 & 1 & \cdots & 1 \\
n_1 & n_2 & \cdots & n_s \\
\vdots & \vdots & \ddots & \vdots \\
[n_1, s-2] & [n_2,s-2] & \cdots & [n_s, s-2]
\end{pmatrix}
= \text{det}
\begin{pmatrix}
1 & 1 & \cdots & 1 \\
n_1 & n_2 & \cdots & n_s \\
\vdots & \vdots & \ddots & \vdots \\
n_1^{s-1} & n_2^{s-1} & \cdots & n_s^{s-1}
\end{pmatrix}
\]
From the Vandermonde determinant, we see that $\lambda_1 =\lambda_2 = \cdots =\lambda_s=0$, as required.

Now, we have
$$\sum_{i=1}^s ( x^{n_i} \otimes \widetilde{g^i} + \widetilde{f^i} \otimes x^{m_i} + \widetilde{f^i} \otimes \widetilde{g^i})
=f^1 \otimes g^1 + \cdots + f^s \otimes g^s \in \text{Ker}(\pi^V_2).$$
By repeating the above process, we can conclude that
$$\sum_{i=1}^s ( x^{n_i} \otimes \widetilde{g^i} + \widetilde{f^i} \otimes x^{m_i} + \widetilde{f^i} \otimes \widetilde{g^i})=0.$$
Consequently, $f^1 \otimes g^1 + \cdots + f^s \otimes g^s=0$.
Hence $\text{Ker}(\pi^V_2)=0$, and the proof is complete.

\end{proof}

\begin{remark}
If $m >0$, then $<x^m>$ is a differential ideal of $(\C[x], x^m \frac{d}{dx})$.
Hence for any $m > 0$, the vertex algebra $(\C[x], x^m \frac{d}{dx})$ is not simple.

\end{remark}

In order to obtain more $\pi_2$-injective vertex algebras, we review the definition of non-degenerate vertex algebras introduced in \cite{EK,Li1}.
For $n \geq 1$, let
$$Z^V_n: V^{\otimes n} \otimes \C((z_1))\cdots((z_n)) \to V((z_1))\cdots((z_n)) $$
be the linear map defined by
$$Z^V_n(v^1 \otimes \cdots v^n \otimes f)=fY(v^1, z_1)\cdots Y(v^n,z_n){\bf 1}$$
for $v_1, \cdots, v_n \in V$, and $f \in \C((z_1))\cdots((z_n))$.

\begin{definition}
A vertex algebra $V$ is said to be {\em nondegenerate} if for any $n \geq 0$, the linear map $Z^V_n$ is injective.
\end{definition}

\begin{remark}
It is shown in \cite{Li2} that if $V$ is an irreducible vertex algebra of countable dimension, then $V$ is nondegenerate.
 Additionally, \cite{Li2} also points out that a simple vertex algebra is not necessarily nondegenerate.
\end{remark}

\begin{proposition} \cite{Li1}
If $V$ is a nondegenerate vertex algebra, then $V$ is $\pi_2$-injective.
\end{proposition}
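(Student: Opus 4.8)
The plan is to show that non-degeneracy of $V$ directly forces the injectivity of $\pi^V_2$ by exhibiting $\pi^V_2$ as (essentially) a restriction or a specialization of the map $Z^V_2$. Recall that $Z^V_2$ sends $v^1 \otimes v^2 \otimes f$ to $f\, Y(v^1,z_1)Y(v^2,z_2){\bf 1}$, while $\pi^V_2$ sends $v^1 \otimes v^2$ to $Y(v^1,z_1)v^2$. The key observation is that by Proposition \ref{ED-der}(1) we have $Y(v^2,z_2){\bf 1}=e^{z_2\mathcal{D}}v^2 \in V[[z_2]]$, so $Y(v^1,z_1)Y(v^2,z_2){\bf 1}=Y(v^1,z_1)e^{z_2\mathcal{D}}v^2$, and setting $z_2=0$ recovers $Y(v^1,z_1)v^2$. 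So the first step is to record that $Z^V_1$ (or $Z^V_2$) restricted appropriately controls $\pi^V_2$.

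More precisely, I would argue as follows. Suppose $\sum_{i=1}^s v^i \otimes w^i \in \mathrm{Ker}(\pi^V_2)$ with the $v^i$ linearly independent, i.e. $\sum_i Y(v^i,z_1)w^i=0$. I want to conclude each $w^i=0$. Consider instead the elements $v^i \otimes w^i \otimes 1 \in V^{\otimes 2}\otimes \C((z_1))((z_2))$ — but here $w^i$ is a vector, not of the form $Y(\text{something},z_2){\bf 1}$ in an obvious way. The cleaner route: first establish the standard fact (or cite it from \cite{Li1,EK}) that non-degeneracy of $V$ is equivalent to the statement that whenever $u^1,\dots,u^s \in V$ are linearly independent and $\sum_i Y(u^i,z)c^i=0$ for $c^i \in V$, then all $c^i=0$ — this is precisely the $n=1$ incarnation of non-degeneracy after using that $Y(c^i,z'){\bf 1}$ determines $c^i$ (its constant term). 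Alternatively, I would invoke $Z^V_2$ directly: from $\sum_i Y(v^i,z_1)w^i=0$, multiply by $Y(\cdot,z_2){\bf 1}$ in the sense of writing $w^i$ abstractly; the honest move is to use the creation property to embed $V$ into $V[[z_2]]$ via $c \mapsto Y(c,z_2){\bf 1}=e^{z_2\mathcal{D}}c$, which is injective (constant term is $c$), and then $\sum_i Y(v^i,z_1)e^{z_2\mathcal{D}}w^i = \sum_i Y(v^i,z_1)Y(w^i,z_2){\bf 1} = Z^V_2\big(\sum_i v^i\otimes w^i \otimes 1\big)$, which vanishes; non-degeneracy ($Z^V_2$ injective) gives $\sum_i v^i \otimes w^i \otimes 1 = 0$ in $V^{\otimes 2}\otimes \C((z_1))((z_2))$, hence $\sum_i v^i\otimes w^i=0$ in $V^{\otimes 2}$, and linear independence of the $v^i$ forces all $w^i=0$.

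So the key steps, in order, are: (i) recall Proposition \ref{ED-der}(1), $Y(c,z){\bf 1}=e^{z\mathcal{D}}c$, and note the map $c \mapsto Y(c,z_2){\bf 1}$ from $V$ into $V[[z_2]]$ is injective; (ii) given an element of $\mathrm{Ker}(\pi^V_2)$, rewrite $\sum_i Y(v^i,z_1)w^i$ as the image under $Z^V_2$ of the corresponding element of $V\otimes V\otimes \C((z_1))((z_2))$ (with $f=1$); (iii) apply injectivity of $Z^V_2$ from the non-degeneracy hypothesis to conclude the tensor vanishes; (iv) deduce $\pi^V_2$ is injective. The main obstacle — really the only subtle point — is step (ii): making sure the identification $Y(v^i,z_1)w^i = $ (image of $v^i\otimes w^i\otimes 1$ under $Z^V_2$, as elements of $V((z_1))((z_2))$, i.e. with $z_2$-dependence) is literally correct and that vanishing in $V((z_1))((z_2))$ pulls back to vanishing in $V((z_1))$ after the substitution/comparison of $z_2^0$-coefficients. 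This is routine once one writes $Y(w^i,z_2){\bf 1} = w^i + O(z_2)$ and matches constant terms in $z_2$, but it is the place where a careful argument is needed rather than a one-line invocation.
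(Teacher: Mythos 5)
Your overall route is the same as the paper's: use the creation property $Y(c,z_2){\bf 1}=e^{z_2\mathcal{D}}c$ to convert $\pi^V_2$ data into $Z^V_2$ data and then quote nondegeneracy. However, the one step that carries all the weight is left unproved. From $\sum_i Y(v^i,z_1)w^i=0$ you assert that $\sum_i Y(v^i,z_1)e^{z_2\mathcal{D}}w^i=Z^V_2\bigl(\sum_i v^i\otimes w^i\otimes 1\bigr)$ ``vanishes'', but the justification you offer (injectivity of the embedding $c\mapsto Y(c,z_2){\bf 1}$) is irrelevant to that vanishing: replacing each $w^i$ by $e^{z_2\mathcal{D}}w^i$ inside the sum is not an operation applied to the vanishing series, so nothing follows from injectivity of the embedding alone. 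Likewise, the ``routine check'' you flag at the end (writing $Y(w^i,z_2){\bf 1}=w^i+O(z_2)$ and matching $z_2^0$-coefficients) only establishes the trivial converse implication, namely that $Z^V_2(\alpha\otimes 1)=0$ forces $\pi^V_2(\alpha)=0$. What the proof needs is the opposite direction, $\pi^V_2(\alpha)=0\Rightarrow Z^V_2(\alpha\otimes 1)=0$, and your factorization $\pi^V_2=\mathrm{ev}_{z_2=0}\circ Z^V_2(\,\cdot\otimes 1)$ cannot deliver it, because evaluation at $z_2=0$ is far from injective. So as written there is a genuine gap exactly at the crucial step, and the subtle point you identify is the wrong one.

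The missing ingredient is translation covariance, Proposition \ref{ED-der}(3): $Y(v,z_1)e^{z_2\mathcal{D}}=e^{z_2\mathcal{D}}Y(v,z_1-z_2)$. With it,
$$\sum_i Y(v^i,z_1)Y(w^i,z_2){\bf 1}=\sum_i Y(v^i,z_1)e^{z_2\mathcal{D}}w^i=e^{z_2\mathcal{D}}\sum_i Y(v^i,z_1-z_2)w^i,$$
and the last sum is the image of the zero series $\sum_i Y(v^i,z)w^i\in V((z))$ under the substitution $z\mapsto z_1-z_2$ (expanded in nonnegative powers of $z_2$), hence zero; this is precisely how the paper argues. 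Equivalently, one can induct on $k$ using $Y(u,z)\mathcal{D}=\mathcal{D}Y(u,z)-\frac{d}{dz}Y(u,z)$ to get $\sum_i Y(v^i,z_1)\mathcal{D}^k w^i=0$ for all $k\geq 0$, which gives the same conclusion coefficientwise. Once this is supplied your argument closes and coincides with the paper's proof; note also that linear independence of the $v^i$ is not needed, since injectivity of $Z^V_2$ already gives $\sum_i v^i\otimes w^i\otimes 1=0$ and hence $\sum_i v^i\otimes w^i=0$.
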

\begin{proof}
Here we give a direct proof.
Assume that $\pi^V_2$ is not injective.
Let $0 \neq \sum_{i=0}^s v^i \otimes u^i \in \text{Ker}(\pi^V_2)$ for some $s$.
Then $ \sum_{i=0}^s Y(v^i,z)u^i=0.$
By Proposition \ref{ED-der}, we have
\begin{align*}
0 &=\sum_{i=0}^s Y(v^i,z_1-z_2)u^i=\sum_{i=0}^s e^{-z_2 \mathcal{D}} Y(v^i,z_1)e^{z_2 \mathcal{D}}  u^i\\
&= \sum_{i=0}^s e^{-z_2 \mathcal{D}} Y(v^i,z_1)Y(u^i,z_2){\bf 1}.
\end{align*}
Consequently, we have
\begin{align*}
Z^V_2(\sum_{i=0}^s v^i \otimes u^i \otimes 1)=\sum_{i=0}^s Y(v^i,z_1)Y(u^i,z_2){\bf 1}=0.
\end{align*}
This contradicts the fact that the map $Z^V_2$ is injective. The proof is complete.
\end{proof}

\begin{proposition}\cite{Li2}
Let $U$ and $V$ be nondegenerate vertex algebras. Then the tensor product vertex algebra $U \otimes V$ is also nondegenerate.
In particular,   $U \otimes V$ is $\pi_2$-injective.
\end{proposition}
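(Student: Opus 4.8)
The plan is to prove the stronger assertion that $U\otimes V$ is nondegenerate and then quote the preceding proposition to get $\pi_2$-injectivity for free. Write $R_n=\C((z_1))\cdots((z_n))$ and, for a vertex algebra $W$ and $\vec w=w^1\otimes\cdots\otimes w^n\in W^{\otimes n}$, abbreviate $\rho^W_n(\vec w)=Y(w^1,z_1)\cdots Y(w^n,z_n){\bf 1}\in W((z_1))\cdots((z_n))$, so that $Z^W_n(\vec w\otimes f)=f\,\rho^W_n(\vec w)$. The first observation I would record is that $W((z_1))\cdots((z_n))$ is a module over the field $R_n$ (coefficientwise Laurent multiplication), and that with this action on the target and the evident action on the last tensor factor of the source, $Z^W_n$ is $R_n$-linear. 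Since every element of $W^{\otimes n}\otimes_{\C}R_n$ is uniquely $\sum_a w_a\otimes f_a$ with $\{w_a\}$ a fixed basis of $W^{\otimes n}$, it follows that $W$ is nondegenerate if and only if, for every $n$ and every basis $\{w_a\}$ of $W^{\otimes n}$, the family $\{\rho^W_n(w_a)\}$ is linearly independent over $R_n$ in $W((z_1))\cdots((z_n))$.

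Next I would unwind the tensor product structure $Y_{U\otimes V}(u\otimes v,z)=Y_U(u,z)\otimes Y_V(v,z)$, ${\bf 1}_{U\otimes V}={\bf 1}_U\otimes{\bf 1}_V$, and the identification $(U\otimes V)^{\otimes n}=U^{\otimes n}\otimes V^{\otimes n}$, to obtain the key identity
\[
\rho^{U\otimes V}_n(\vec u\otimes\vec v)\;=\;\rho^U_n(\vec u)\;\boxtimes\;\rho^V_n(\vec v),\qquad \vec u\in U^{\otimes n},\ \vec v\in V^{\otimes n},
\]
(extended bilinearly), where $\boxtimes\colon U((z_1))\cdots((z_n))\times V((z_1))\cdots((z_n))\to (U\otimes V)((z_1))\cdots((z_n))$ is coefficientwise multiplication of iterated Laurent series; one checks it is well defined (it respects the nested support conditions) and $R_n$-bilinear. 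Now take $\theta\in(U\otimes V)^{\otimes n}\otimes_{\C}R_n$ with $Z^{U\otimes V}_n(\theta)=0$. Fixing bases $\{\zeta_b\}$ of $U^{\otimes n}$ and $\{\xi_a\}$ of $V^{\otimes n}$, write $\theta=\sum_{a,b}\zeta_b\otimes\xi_a\otimes h_{ab}$ with $h_{ab}\in R_n$ almost all zero, set $P_a:=\sum_b h_{ab}\,\rho^U_n(\zeta_b)\in U((z_1))\cdots((z_n))$, and use the displayed identity together with $R_n$-bilinearity of $\boxtimes$ to get $0=Z^{U\otimes V}_n(\theta)=\sum_a P_a\boxtimes\rho^V_n(\xi_a)$.

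To separate the two factors I would, for each $\psi\in\mathrm{Hom}_{\C}(U,\C)$, apply the coefficientwise map $\psi\otimes\mathrm{id}_V\colon(U\otimes V)((z_1))\cdots((z_n))\to V((z_1))\cdots((z_n))$, which is $R_n$-linear and satisfies $(\psi\otimes\mathrm{id}_V)(P\boxtimes q)=\psi(P)\,q$; this gives $\sum_a\psi(P_a)\,\rho^V_n(\xi_a)=0$. By the reformulation above applied to the nondegenerate $V$, the $\rho^V_n(\xi_a)$ are $R_n$-linearly independent, so $\psi(P_a)=0$ for all $a$ and all $\psi$, whence $P_a=0$ for every $a$. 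But $P_a=Z^U_n\bigl(\sum_b\zeta_b\otimes h_{ab}\bigr)=0$, and nondegeneracy of $U$ forces $\sum_b\zeta_b\otimes h_{ab}=0$ in $U^{\otimes n}\otimes_{\C}R_n$; since $\{\zeta_b\}$ is a basis, $h_{ab}=0$ for all $a,b$, i.e. $\theta=0$. Hence each $Z^{U\otimes V}_n$ is injective, $U\otimes V$ is nondegenerate, and the preceding proposition yields that it is $\pi_2$-injective.

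I expect the only delicate part to be the formal-variable bookkeeping: verifying that $\boxtimes$ and the maps $\psi\otimes\mathrm{id}_V$ are well defined on the iterated Laurent series spaces and are $R_n$-linear, and that the identity $\rho^{U\otimes V}_n(\vec u\otimes\vec v)=\rho^U_n(\vec u)\boxtimes\rho^V_n(\vec v)$ holds as stated. Once these are in place, the algebraic skeleton — reduce nondegeneracy to $R_n$-linear independence of the $\rho_n$-images of a basis, peel off the $V$-factor using functionals on $U$, then apply nondegeneracy of $U$ — is routine. Alternatively one may simply follow the argument of \cite{Li2}.
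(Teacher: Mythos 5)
Your argument is correct, but note that the paper does not actually prove this statement: it is quoted verbatim from \cite{Li2}, and the only thing the paper itself adds is the final sentence, obtained by combining Li's tensor-product theorem with the immediately preceding proposition (nondegenerate $\Rightarrow$ $\pi_2$-injective). What you have done is supply a self-contained proof of Li's theorem, and it holds up: the reformulation of nondegeneracy as $R_n$-linear independence of the series $\rho^W_n(w_a)$ attached to a basis of $W^{\otimes n}$ is exactly right (the map $Z^W_n$ is $R_n$-linear and $W^{\otimes n}\otimes_{\C}R_n$ is a free $R_n$-module on $\{w_a\otimes 1\}$); the factorization $\rho^{U\otimes V}_n(\vec u\otimes \vec v)=\rho^U_n(\vec u)\boxtimes\rho^V_n(\vec v)$ is a routine induction on $n$ using $Y_{U\otimes V}(u\otimes v,z)=Y_U(u,z)\otimes Y_V(v,z)$ and ${\bf 1}_{U\otimes V}={\bf 1}_U\otimes{\bf 1}_V$, with well-definedness of $\boxtimes$ guaranteed by the nested Laurent truncations; and the separation step via functionals $\psi\in \mathrm{Hom}_{\C}(U,\C)$ is clean, since $(\psi\otimes\mathrm{id}_V)(P\boxtimes Q)=\psi(P)Q$ with $\psi(P)\in R_n$ and functionals separate the coefficients of $P_a$, so linear independence of the $\rho^V_n(\xi_a)$ over $R_n$ forces $P_a=0$, and then nondegeneracy of $U$ forces all $h_{ab}=0$. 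This is essentially a reconstruction of the argument in \cite{Li2} (nondegeneracy there is precisely this linear-independence condition over $\C((z_1))\cdots((z_n))$), so in the context of this paper the shorter course is simply to cite \cite{Li2}; but as a standalone proof yours is complete, up to the two bookkeeping verifications you yourself flag (well-definedness and $R_n$-bilinearity of $\boxtimes$, and the factorization identity), which are indeed routine, and the trivial $n=0$ case of the definition.
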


\begin{remark}
The tensor product vertex algebra of  two $\pi_2$-injective vertex algebras  may not be $\pi_2$-injective:
Let $U=(\C[x], \frac{d}{dx})$ and $V=(\C[y], \frac{d}{dy})$.
Then $U$ and $V$ are $\pi_2$-injective vertex algebras by Lemma \ref{pi-ex1}.
However, the tensor product vertex algebra
$U \otimes V \cong (\C[x,y], \frac{\partial}{\partial x}+\frac{\partial}{ \partial y})$ is not $\pi_2$-injective.
For example, it is easy to see that $(x-y)\otimes 1- 1 \otimes (x-y) \in \text{Ker} (\pi_2).$
\end{remark}

\begin{example} \cite{Li2} \label{ex-FCD}
Let $U$ be a vector space. Set
$$FCD(U)=S(U \otimes \C[\partial]),$$
the symmetric algebra over the vector space $U \otimes \C[\partial]$, where $\partial$ is a formal variable.
Note that $FCD(U)$ is  the free commutative differential algebra over $U$ with the derivation $\partial$.

If $U$ is a vector space of countable dimension,
then the free commutative differential algebra $(FCD(U), \partial)$, viewed as a vertex algebra, is nondegenerate.
In particular, the vertex algebra $(FCD(U), \partial)$ is $\pi_2$-injective.
\end{example}

\begin{example} \cite{Li2}
Let $\mathfrak{g}$ be any Lie algebra of countable dimension
equipped with a nondegenerate symmetric invariant bilinear form.
For $k \in \C$, let $V_{\mathfrak{\hat{g}}}(k,0)$ be the level $k$ vacuum module
vertex algebra associated with the Lie algebra $\mathfrak{g}$ (see \cite{LL}).
Then $V_{\mathfrak{\hat{g}}}(k,0)$  is a nondegenerate vertex algebra.
In particular, the vertex algebra $V_{\mathfrak{\hat{g}}}(k,0)$ is $\pi_2$-injective.
\end{example}

\begin{example} \cite{Li2}
Let $V_{vir}(c, 0)$ be the (universal) Virasoro vertex operator algebra with the central charge $c \in \C$ (see \cite{LL}).
Then the vertex operator algebra $V_{vir}(c, 0)$ is  nondegenerate.
In particular, the vertex algebra $V_{vir}(c, 0)$ is $\pi_2$-injective.

\end{example}

\section{Hopf actions on $\pi_2$-injective vertex algebras }

The goal of this section is to prove Theorem \ref{thm-gr-alg} and Theorem \ref{thm-Hopf-ideal} below. To do this, we need some lemmas.
\begin{lemma} \label{linear-inj}
Let $M$ and $N$ be two vector spaces (not necessarily finite-dimensional).
 Let $$\lambda_{M,N}: \text{End}_\C(M) \otimes \text{End}_\C(N) \to \text{End}_\C(M \otimes N)$$ be
 the linear map defined by $$\lambda_{M,N}(f \otimes g)(m \otimes n)=f(m) \otimes g(n)$$
for $f \in  \text{End}_\C(M), g \in \text{End}_\C(N), m \in M $, and $n \in N$.
Then the linear map $\lambda_{M,N}$ is injective.
\end{lemma}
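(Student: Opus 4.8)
The plan is to show directly that the kernel of $\lambda_{M,N}$ is trivial. So suppose $\Phi = \sum_{i=1}^{k} f_i \otimes g_i$ lies in $\ker(\lambda_{M,N})$, and assume without loss of generality that the $f_1,\dots,f_k \in \text{End}_\C(M)$ are linearly independent over $\C$ (rewriting the sum to achieve this, if it is nonzero). The hypothesis says that for all $m \in M$ and $n \in N$ we have $\sum_{i=1}^k f_i(m) \otimes g_i(n) = 0$ in $M \otimes N$. The goal is to conclude that every $g_i = 0$, which forces $\Phi = 0$.

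The key step is to fix an arbitrary $n \in N$ and consider, for each $m \in M$, the element $\sum_i f_i(m) \otimes g_i(n) \in M \otimes N$. Since $g_1(n),\dots,g_k(n)$ span a finite-dimensional subspace of $N$, pick a basis and re-express this sum so that the second tensor factors are linearly independent; then the vanishing of the sum forces the corresponding $M$-components to vanish. More cleanly: choose linear functionals $\varphi \in N^*$ and apply $\text{id}_M \otimes \varphi$ to the identity $\sum_i f_i(m) \otimes g_i(n) = 0$ (using the natural map $M \otimes N \to M$ induced by $\varphi$ on the second factor), obtaining $\sum_i \varphi(g_i(n))\, f_i(m) = 0$ in $M$ for every $m \in M$. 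Since this holds for all $m$, we get $\sum_i \varphi(g_i(n)) f_i = 0$ as an operator, and linear independence of $f_1,\dots,f_k$ yields $\varphi(g_i(n)) = 0$ for each $i$. As $\varphi \in N^*$ and $n \in N$ were arbitrary, $g_i(n) = 0$ for all $i$ and all $n$, hence each $g_i = 0$ and $\Phi = 0$.

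The only mild subtlety — and the place to be slightly careful rather than the real obstacle — is that $M$ and $N$ are allowed to be infinite-dimensional, so one cannot simply invoke $\text{End}_\C(M) \otimes \text{End}_\C(N) \hookrightarrow \text{End}_\C(M \otimes N)$ as a special case of the finite-dimensional identification $\text{Hom}(M,M')\otimes\text{Hom}(N,N') \cong \text{Hom}(M\otimes N, M'\otimes N')$. The argument above sidesteps this entirely: it uses only that an element of a tensor product $M \otimes N$ written with linearly independent first (or second) factors is zero iff all the paired factors are zero, together with the separating family of functionals $N^*$. No finiteness of $M$ or $N$ is needed. I would present the functional-pairing version since it is the cleanest, and remark that the reduction to linearly independent $f_i$ is the standard "shortest expression" normalization for tensors.
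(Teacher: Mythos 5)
Your proof is correct. The paper states Lemma \ref{linear-inj} without proof, treating it as standard, so there is no argument of the authors to compare against; your proof --- normalizing the element of the kernel so that the $f_i$ are linearly independent and then pairing the second tensor factor against functionals $\varphi\in N^{*}$ to get $\sum_i\varphi(g_i(n))f_i=0$ in $\text{End}_\C(M)$, hence $\varphi(g_i(n))=0$ for all $\varphi$ and $n$, hence $g_i=0$ --- is precisely the standard argument one would supply, and it correctly uses only that $N^{*}$ separates points, with no finite-dimensionality needed.
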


The following statement will be useful later.  Consider two associative algebras $A$ and $B$.
Let  $M$ be an $A$-module and $N$ a $B$-module.
We note that $M \otimes N$ is an $A \otimes B$-module,
where the action is defined by $(a \otimes b)(m \otimes n)= am \otimes bn$ for $a \in A, b \in B, m \in M $, and $n \in N$.
Furthermore, if $M$ is a faithful $A$-module and $N$ is a faithful $B$-module,
then by Lemma \ref{linear-inj}, $M \otimes N$ is a faithful $A \otimes B$-module.

\medskip

For a positive integer $n$, let $S_n$ be the symmetric group on the set $\{1, 2, \cdots, n \}$.
For $\tau \in S_n$, we define a linear map $\widetilde{\tau}: V^{\otimes n} \to V^{\otimes n}$ by
$$\widetilde{\tau }(v_1 \otimes \cdots v_n)=v_{\tau 1} \otimes \cdots v_{\tau n}$$
for $v_1, \cdots , v_n \in V$.

\begin{lemma} \label{H-iso}
Let $V$ be a vertex algebra such that $\pi^V_2$ is injective, and let $\tau \in S_n$.
Then the linear map $\widetilde{\tau}$ defined above is an $H$-isomorphism.
\end{lemma}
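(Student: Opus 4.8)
The plan is to show that $\widetilde{\tau}$ commutes with the $H$-action, i.e. $h\cdot\widetilde{\tau}(w)=\widetilde{\tau}(h\cdot w)$ for all $h\in H$ and $w\in V^{\otimes n}$; since $\widetilde{\tau}$ is obviously a linear bijection (with inverse $\widetilde{\tau^{-1}}$), this will make it an $H$-isomorphism. The $H$-module structure on $V^{\otimes n}$ is the iterated one, so for $h\in H$ the action on $v_1\otimes\cdots\otimes v_n$ is $\sum h_{(1)}v_1\otimes\cdots\otimes h_{(n)}v_n$ using the iterated coproduct $\Delta^{(n-1)}(h)=\sum h_{(1)}\otimes\cdots\otimes h_{(n)}$. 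Applying $\widetilde{\tau}$ then permutes the tensor legs, while applying $\widetilde{\tau}$ first and then $h$ permutes the legs and then hits the $j$-th leg (originally in position $\tau j$) with $h_{(j)}$. So the desired identity is exactly the statement that $\Delta^{(n-1)}$ is \emph{cocommutative} in the appropriate sense — which is false for general $H$. Hence the naive argument cannot work for arbitrary $H$, and the $\pi_2$-injectivity hypothesis must be doing real work. This tension is the crux of the problem.

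The resolution I expect is that $\pi_2$-injectivity forces the $H$-action on $V$ to be cocommutative in effect, or more precisely forces enough compatibility that the permutation does become $H$-equivariant. The key observation should come from combining Lemma~\ref{le2-20}(2) (the $H$-action commutes with $\mathcal D$) with the defining compatibility $h(Y(u,z)v)=\sum Y(h_1u,z)h_2v$ and the skew-symmetry formula $Y(u,z)v=e^{z\mathcal D}Y(v,-z)u$ from Proposition~\ref{ED-der}(2). Applying $h$ to both sides of the skew-symmetry identity and using that $h$ commutes with $e^{z\mathcal D}$ gives
\begin{align*}
\sum Y(h_1u,z)h_2v = h(Y(u,z)v) = h\bigl(e^{z\mathcal D}Y(v,-z)u\bigr) = e^{z\mathcal D}\sum Y(h_1v,-z)h_2u = \sum Y(h_2u,z)h_1v,
\end{align*}
where the last step again uses skew-symmetry (now in the variable $z$) on $e^{z\mathcal D}Y(h_1v,-z)h_2u$. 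Thus $\sum Y(h_1u,z)h_2v=\sum Y(h_2u,z)h_1v$ for all $u,v\in V$. Reading the coefficients through $\pi^V_2$ and using that $\pi^V_2$ is injective, this says precisely that $\sum h_1u\otimes h_2v=\sum h_2u\otimes h_1v$ in $V\otimes V$ for all $u,v$; that is, the $H$-action on $V\otimes V$ through $\Delta$ equals the action through $\Delta^{\mathrm{op}}$. (This is the mechanism behind the "cocommutative" expectation in the introduction.)

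From here the proof of the lemma is a bookkeeping induction. First I would establish the base case $n=2$, $\tau=(1\,2)$: the identity $\sum h_1u\otimes h_2v=\sum h_2v\otimes h_1u$... wait, I mean the flip $\widetilde{\tau}$ sends $\sum h_1u\otimes h_2v$ to $\sum h_2v\otimes h_1u$, and we must check this equals $h\cdot(v\otimes u)=\sum h_1v\otimes h_2u$; by the displayed identity (with the roles of $u,v$ swapped) $\sum h_1v\otimes h_2u=\sum h_2v\otimes h_1u$, so these agree. For general $n$ and general $\tau$, I would write $\tau$ as a product of adjacent transpositions and reduce to the case $\tau=(i\,i{+}1)$; that case follows by applying the $n=2$ result in the $i$-th and $(i{+}1)$-st tensor slots, using coassociativity of $\Delta$ to isolate those two legs (the outer slots carry $h_{(1)},\dots,h_{(i-1)},h_{(i+2)},\dots,h_{(n)}$ and are untouched). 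The main obstacle is getting the $n=2$ identity $\sum h_1u\otimes h_2v=\sum h_2u\otimes h_1v$ cleanly from $\pi_2$-injectivity — that is where the hypothesis is consumed, and care is needed to make sure the two applications of skew-symmetry and the commutation with $e^{z\mathcal D}$ are applied in the correct formal variables; everything after that is routine manipulation of iterated coproducts. I would double-check that the statement of the lemma really intends the iterated tensor $H$-module structure (it must, for this to be the right notion), and note that composing with $\widetilde{\tau^{-1}}$ gives the two-sided inverse, completing the proof that $\widetilde{\tau}$ is an $H$-isomorphism.
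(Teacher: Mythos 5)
Your proof is correct and uses essentially the same mechanism as the paper: the paper composes the $H$-isomorphisms $\pi^V_2$, $e^{-z\mathcal D}$ (via skew-symmetry and Lemma \ref{le2-20}(2)), and $\widetilde{\pi}^V_2$ to get the flip $\varphi=(\widetilde{\pi}^V_2)^{-1}e^{-z\mathcal D}\pi^V_2$, which is exactly your element-wise computation $\sum Y(h_1u,z)h_2v=\sum Y(h_2u,z)h_1v$ followed by $\pi_2$-injectivity. Your explicit reduction from general $\tau$ to the flip via adjacent transpositions and coassociativity is only asserted in the paper, so that part is a welcome (and correct) elaboration rather than a divergence.
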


\begin{proof}
To prove that the linear map $\widetilde{\tau}$ is an $H$-isomorphism,
it is sufficient to show that the linear map  $\varphi: V \otimes V \to V \otimes V $ defined by
$\varphi (u \otimes v) =v \otimes u$ for $u, v \in V$,  is an $H$-isomorphism.
To continue the proof, we set
$$\mathcal{V}^0=\text{span}\{Y(u,z)v \ | \ u, v \in V \}\subseteq V\{z\},$$
and $$\mathcal{V}^1=\text{span}\{Y(u,-z)v \ | \ u, v \in V \}\subseteq V\{z\}.$$
We note that $V\{z\}$ has an $H$-module structure under the action defined by
$$h(\sum v^i z^i)=\sum (hv^i)z^i$$ for $h \in H, v^i \in V$.
As $V$ is an $H$-module vertex algebra, we can see that $\mathcal{V}^0$ and $\mathcal{V}^1$ are $H$-submodules of $V\{z\}$.
We note that $e^{-z\mathcal{D}} \mathcal{V}^0=\mathcal{V}^1.$
Furthermore, since the actions of $\mathcal{D}$ and $H$ on $V$ commute (see Lemma \ref{le2-20}(2)),
the map $e^{-z\mathcal{D}}: \mathcal{V}^0 \rightarrow \mathcal{V}^1$ is an $H$-isomorphism.

Since $\pi^V_2$ is injective,  the map $\pi^V_2: V \otimes V \to \mathcal{V}^0$ is an $H$-isomorphism.
Similarly, it is easy to verify that the linear map $\widetilde{\pi}^V_2: V \otimes V \to \mathcal{V}^1$ defined
by $\widetilde{\pi}^V_2 (u \otimes v)=Y(u,-z)v$ for $u, v \in V$ is also an $H$-isomorphism.
A straightforward calculation shows that $\varphi =(\widetilde{\pi}^V_2)^{-1}e^{-z\mathcal{D}}\pi^V_2$.
Therefore $\varphi$ is an $H$-isomorphism.
The proof is complete.

\end{proof}

\begin{remark}
When $V$ is a simple vertex operator algebra, it is shown in \cite{DW} that $\varphi$ is an $H$-isomorphism.
\end{remark}

\begin{lemma} \label{inner-to-faith}
Let $H$ be a finite-dimensional Hopf algebra.
Assume that $V$ is an inner faithful $H$-module vertex algebra.
Then there exists some $s_0$ such that $H$ acts faithfully on $V^{\otimes s}$  for any $s \geq s_0$.
\end{lemma}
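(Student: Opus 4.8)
The plan is to reduce the statement to a fact about finite-dimensional representations of a finite-dimensional algebra. First I would invoke the structure of $H$ as a finite-dimensional associative algebra: the quotient $H/J$ by its Jacobson radical $J$ is semisimple, so $H$ has only finitely many isomorphism classes of irreducible modules, say $S_1,\dots,S_r$. Since $H$ is finite-dimensional, each $S_k$ appears (up to isomorphism) as a subquotient of the regular representation, hence as a subquotient of $H$ acting on something; more to the point, for a finite-dimensional algebra, a module $W$ is faithful if and only if every irreducible $S_k$ occurs as a subquotient of $W$ \emph{and} $W$ is ``big enough'' to detect the radical — concretely, it suffices that the left regular module ${}_HH$ embeds in a direct sum of copies of $W$. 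So the key is to produce, for each $k$, a copy of $S_k$ inside some tensor power $V^{\otimes s}$, and then to combine these.

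The main step is the following: because $V$ is an inner faithful $H$-module, every irreducible $H$-module $S_k$ occurs as a composition factor of some tensor power $V^{\otimes n_k}$. Indeed, if some $S_k$ failed to occur in any $V^{\otimes n}$, one would build a nonzero Hopf ideal of $H$ annihilating all $V^{\otimes n}$ and in particular annihilating $V$ (take $n=1$), contradicting inner faithfulness; here one uses that the annihilator in $H$ of the family $\{V^{\otimes n} : n\ge 1\}$ is a bialgebra ideal — it is a two-sided ideal, and $\Delta$ carries it into $H\otimes I + I\otimes H$ precisely because $h(Y(u,z)v)=\sum Y(h_1u,z)h_2v$ makes $V\otimes V$, and inductively $V^{\otimes n}$, into $H$-modules via $\Delta$ — and then Lemma \ref{bi-ideal} upgrades it to a Hopf ideal, which must be zero. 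This uses finite-dimensionality of $H$ crucially (both for Lemma \ref{bi-ideal} and for there being finitely many $S_k$). Now set $s_0 = n_1 + n_2 + \cdots + n_r$ (or any common upper bound, using that $V$ contains $\1$ so $V^{\otimes n}$ sits inside $V^{\otimes n'}$ as an $H$-submodule for $n\le n'$ via $w\mapsto w\otimes \1^{\otimes(n'-n)}$, the map being $H$-linear since $h\1=\epsilon(h)\1$). Then $V^{\otimes s_0}$ — and hence $V^{\otimes s}$ for all $s\ge s_0$ — contains every $S_k$ as a composition factor.

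Finally I would pass from ``contains every irreducible as a composition factor'' to ``faithful.'' For a finite-dimensional algebra this is not automatic, so here is where I would use the tensor structure to amplify: $V^{\otimes s}$ for $s \ge 2s_0$ contains $V^{\otimes s_0}\otimes V^{\otimes s_0}$, and more generally $V^{\otimes ms_0}$ contains the $m$-fold tensor power $(V^{\otimes s_0})^{\otimes m}$ as an $H$-submodule (diagonal action via iterated $\Delta$). A module $M$ whose composition factors are all of the $S_k$ has the property that $M^{\otimes m}$, for $m$ large, has the trivial module's projective cover's worth of structure — more carefully, the annihilator of $M^{\otimes m}$ shrinks as $m$ grows, and since $\mathrm{Ann}(M)\subseteq J$ (as all simples occur in $M$), and $J$ is nilpotent, $\mathrm{Ann}(M^{\otimes m}) \subseteq \mathrm{Ann}(M)^{m}$ is forced to $0$ once $m$ exceeds the nilpotency index of $J$; this last inclusion holds because $h_1 h_2 \cdots h_m$ annihilates $M^{\otimes m}$ whenever each $h_i\in\mathrm{Ann}(M)$, using the diagonal action and $\Delta$. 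Choosing $s_0$ to absorb this final multiplicity (replace $s_0$ by $N s_0$ with $N$ the nilpotency index of $J$) gives faithfulness of $V^{\otimes s}$ for all $s \ge s_0$. The delicate point — the main obstacle — is precisely this passage from composition factors to faithfulness; the cheap statement only gives that all simples appear, and one genuinely needs the tensor-power amplification together with nilpotence of the radical to kill the rest of the annihilator. Everything else (building the bialgebra ideal, the embeddings $V^{\otimes n}\hookrightarrow V^{\otimes n'}$) is routine bookkeeping with Sweedler notation and the axioms of Definition \ref{D}.
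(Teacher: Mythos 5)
There is a genuine gap, and it sits in your step that passes from ``all simples occur as composition factors'' to faithfulness. The claimed inclusion $\mathrm{Ann}(M^{\otimes m})\subseteq \mathrm{Ann}(M)^m$ is false, and the justification you give for it (``$h_1h_2\cdots h_m$ annihilates $M^{\otimes m}$ whenever each $h_i\in\mathrm{Ann}(M)$'') would at best prove the reverse inclusion. Concretely: let $H$ be Sweedler's Hopf algebra (Example \ref{Sweedler}) and $M=\C_+\oplus\C_-$ the sum of its two simple modules. Then every simple occurs in $M$, $\mathrm{Ann}(M)=J=\mathrm{span}(x,gx)$ and $J^2=0$, so your bound would force $M^{\otimes 2}$ to be faithful. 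But $J$ is a Hopf ideal, so every $M^{\otimes m}$ is an $H/J$-module and is annihilated by $x$; no tensor power of $M$ is ever faithful. This shows that ``all simples appear plus nilpotence of the radical'' cannot yield faithfulness of a tensor power: the obstruction is precisely a nonzero Hopf ideal inside $J$, and ruling that out requires inner faithfulness, which your step (B) never invokes.

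The second problem is that the one place where you do invoke inner faithfulness --- the parenthetical claim in step (A) that the annihilator $K$ of the family $\{V^{\otimes n}\}$ satisfies $\Delta(K)\subseteq K\otimes H+H\otimes K$ --- is asserted with an inadequate justification (``because the tensor powers are $H$-modules via $\Delta$''). That inclusion is the actual crux of the lemma, and proving it needs two ingredients you omit: (i) the descending chain $K_1\supseteq K_2\supseteq\cdots$ of annihilators of $V^{\otimes s}$ stabilizes at some $K_{s_0}=K$ because $\dim H<\infty$, and (ii) $V^{\otimes s}\otimes V^{\otimes t}$ for $s,t\geq s_0$ is a \emph{faithful} $H/K\otimes H/K$-module (Lemma \ref{linear-inj}), so that $k$ killing $V^{\otimes(s+t)}$ forces the image of $\Delta(k)$ in $H/K\otimes H/K$ to vanish. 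Ironically, once (i) and (ii) are in place you are already done: $K$ is a bialgebra ideal, hence a Hopf ideal by Lemma \ref{bi-ideal}, hence $K=0$ by inner faithfulness, and $K=K_{s_0}$ gives faithfulness of $V^{\otimes s}$ for all $s\geq s_0$. This is exactly the paper's proof; all of your composition-factor and radical-nilpotency machinery is both unnecessary and, in step (B), incorrect.
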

\begin{proof}
To prove it, we will use a similar argument as presented in \cite{EW}.
For $s \geq 1$, let $K_s \subset H$ be the kernel of the action of $H$ on $V^{\otimes s}$.
Since $V^{\otimes s} \cong V^{\otimes s} \otimes {\bf 1} \subseteq V^{\otimes (s+1)}$, we see that $K_{s+1} \subseteq K_s$.
Let $K=\bigcap_{i\geq 0}K_i$. Since $H$ is finite-dimensional, there is an integer $s_0$ such that $K=K_s$ for all $s \geq s_0$.
To prove that $V^{\otimes s}$  for any $s \geq s_0$ is a faithful $H$-module, we must show that $K=0$.

Let $k \in K$. By the definition of $K$, the action of $k$ on
$V^{\otimes (s+t)}=V^{\otimes s} \otimes V^{\otimes t}$  is zero for any $s ,t \geq 1$.
We note  $V^{\otimes s} \otimes V^{\otimes t}$ for any $s, t \geq s_0$ is a faithful module over $H/K \otimes H/K$.
Therefore, we have  $\Delta(k) \in K \otimes H +H \otimes K$.
On the other hand, since $k V=0$, we can  deduce that $\epsilon(k)=0$.
Consequently, $K$ is a bialgebra ideal of $H$, and hence a Hopf ideal by Lemma \ref{bi-ideal}.
Since the action of $H$ on $V$ is inner faithful and $KV=0$, hence we have $K=0$, as required.
\end{proof}


\begin{theorem} \label{thm-gr-alg}
Let $H$ be a finite-dimensional Hopf algebra.
Let $V$ be a vertex algebra such that $\pi_2^V$ is injective.
Assume that $V$ is an inner faithful $H$-module vertex algebra.
Then $H$ is a group algebra.
\end{theorem}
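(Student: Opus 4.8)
The plan is to prove that $H$ is cocommutative; then Lemma~\ref{gr-alg} immediately yields that $H$ is a group algebra. The key input is that the flip map is $H$-linear (Lemma~\ref{H-iso}). Writing out $H$-linearity of the flip $\varphi\colon V\otimes V\to V\otimes V$, $\varphi(u\otimes v)=v\otimes u$, gives, for all $h\in H$ and $u,v\in V$,
$$\sum h_2 v\otimes h_1 u=\varphi\bigl(h(u\otimes v)\bigr)=h\,\varphi(u\otimes v)=\sum h_1 v\otimes h_2 u,$$
so the element $\sum h_1\otimes h_2-\sum h_2\otimes h_1\in H\otimes H$ acts as zero on $V\otimes V$. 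If $V\otimes V$ were a faithful $H\otimes H$-module we would be done immediately, but inner faithfulness of $V$ does not give this on the nose.

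To remedy this I would pass to a sufficiently high tensor power. By Lemma~\ref{inner-to-faith} there is an $s_0$ such that $W:=V^{\otimes s}$ is a \emph{faithful} $H$-module for every $s\ge s_0$; fix such an $s$. By the discussion following Lemma~\ref{linear-inj}, $W\otimes W\cong V^{\otimes 2s}$ is then a faithful module over $H\otimes H$ acting factorwise. Next apply Lemma~\ref{H-iso} with $n=2s$ to the permutation $\tau\in S_{2s}$ interchanging the blocks $\{1,\dots,s\}$ and $\{s+1,\dots,2s\}$: the induced map $\widetilde\tau\colon V^{\otimes 2s}\to V^{\otimes 2s}$ is precisely the flip $W\otimes W\to W\otimes W$, $a\otimes b\mapsto b\otimes a$, and it is an $H$-isomorphism for the diagonal $H$-action. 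Rerunning the computation above with $W$ in place of $V$ shows that $\sum h_1\otimes h_2$ and $\sum h_2\otimes h_1$ act identically on $W\otimes W$; faithfulness of $W\otimes W$ over $H\otimes H$ then forces $\sum h_1\otimes h_2=\sum h_2\otimes h_1$ in $H\otimes H$ for every $h\in H$, i.e.\ $H$ is cocommutative, and Lemma~\ref{gr-alg} completes the proof.

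The main obstacle — and the reason one cannot simply use the flip on $V\otimes V$ — is precisely that $\pi_2$-injectivity together with inner faithfulness only produces faithfulness after passing to enough tensor factors. Thus the one genuine thing to check is that the block-swap $\widetilde\tau$ on $V^{\otimes 2s}$ is the flip of $W\otimes W$ and is $H$-linear for the diagonal action; here Lemma~\ref{H-iso} with $n=2s$ does the work, together with the compatibility of the iterated coproduct with regrouping the $2s$ tensor factors into two blocks of size $s$. After that the argument is routine, everything reducing to the injectivity of $\lambda_{W,W}$ in Lemma~\ref{linear-inj}.
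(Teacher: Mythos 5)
Your proposal is correct and follows essentially the same route as the paper: the paper likewise invokes Lemma~\ref{inner-to-faith} to get a faithful tensor power $V^{\otimes s}$, applies Lemma~\ref{H-iso} to the block-swap $V^{\otimes s}\otimes V^{\otimes s}\to V^{\otimes s}\otimes V^{\otimes s}$, and uses faithfulness of $V^{\otimes s}\otimes V^{\otimes s}$ as an $H\otimes H$-module (via Lemma~\ref{linear-inj}) to conclude cocommutativity and then applies Lemma~\ref{gr-alg}. Your explicit remark about coassociativity making the diagonal $H$-action on $V^{\otimes 2s}$ compatible with regrouping into two blocks is a detail the paper leaves implicit, but it is the same argument.
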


\begin{proof}
According to Lemma \ref{inner-to-faith}, we can choose a positive integer $s$ such that $H$ acts faithfully on $V^{\otimes s}$.
According to Lemma \ref{H-iso},
the linear map $f: V^{\otimes s} \otimes V^{\otimes s} \to V^{\otimes s} \otimes V^{\otimes s}$ defined
by $f(\alpha \otimes \beta)=\beta \otimes \alpha$ for $\alpha, \beta \in V^{\otimes s}$, is an $H$-isomorphism.
As a consequence, we have
$$\sum h_1 \beta \otimes h_2 \alpha= \sum h_2 \beta \otimes h_1 \alpha$$
for any $\alpha, \beta \in V^{\otimes s}$, and  $h \in H$.
Since $V^{\otimes s} \otimes V^{\otimes s}$ is  a faithful $H \otimes H$-module,
we can conclude that $\Sigma  h_1 \otimes h_2 =\Sigma h_2 \otimes h_1$ for any $h \in H$.
Thus, $H$ is cocommutative and $H$ is a group algebra by Lemma \ref{gr-alg}.
The proof is complete.
\end{proof}

\begin{corollary}
Consider a vector space $U$ of countable dimension,
and let $FCD(U)=S(U \otimes \C[\partial])$ be the free commutative differential algebra over $U$
with the derivation $\partial$ (Example \ref{ex-FCD}).
Let $H$ be a finite-dimensional Hopf algebra.
Assume that  $FCD(U)$ is an inner faithful $H$-module algebra such that the actions of $H$ and  $\partial$ on  $FCD(U)$ commute.
Then $H$ is a group algebra.
\end{corollary}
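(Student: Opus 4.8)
The plan is to deduce this corollary directly from Theorem \ref{thm-gr-alg} by verifying that the hypotheses of that theorem are met. First I would observe that, by Example \ref{ex2-22}, the commutative differential algebra $(FCD(U),\partial)$ carries the structure of an $H$-module vertex algebra, precisely because $FCD(U)$ is an $H$-module algebra on which the actions of $H$ and $\partial$ commute. The inner faithfulness hypothesis transfers verbatim: a Hopf ideal $I$ with $I\cdot FCD(U)=0$ as a module over the vertex algebra is the same condition as $I\cdot FCD(U)=0$ as a module over the algebra, since the underlying vector space and $H$-action are unchanged. So the only real content to check is that $FCD(U)$ is $\pi_2$-injective.

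Next, I would invoke Example \ref{ex-FCD}: since $U$ has countable dimension, $(FCD(U),\partial)$ is a nondegenerate vertex algebra, hence $\pi_2$-injective by the proposition stating that nondegenerate vertex algebras are $\pi_2$-injective. With $\pi_2^{FCD(U)}$ injective, $FCD(U)$ an inner faithful $H$-module vertex algebra, and $H$ finite-dimensional, Theorem \ref{thm-gr-alg} applies and yields that $H$ is a group algebra.

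I do not expect a genuine obstacle here; the corollary is essentially a repackaging of Theorem \ref{thm-gr-alg} in the language of commutative differential algebras, designed to make the result easy to apply. The only point requiring a sentence of care is the identification of the two notions of inner faithfulness (module over the algebra versus module over the associated vertex algebra), and the observation that the $H$-module vertex algebra structure of Example \ref{ex2-22} is exactly what lets us feed $FCD(U)$ into Theorem \ref{thm-gr-alg}. Thus the proof will be short: cite Example \ref{ex2-22} for the module vertex algebra structure, cite Example \ref{ex-FCD} for $\pi_2$-injectivity, note that inner faithfulness is preserved, and apply Theorem \ref{thm-gr-alg}.
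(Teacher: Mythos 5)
Your proposal is correct and follows exactly the paper's own (one-line) proof: combine Example \ref{ex2-22} to get the $H$-module vertex algebra structure, Example \ref{ex-FCD} for nondegeneracy and hence $\pi_2$-injectivity, and then apply Theorem \ref{thm-gr-alg}. The extra sentence identifying the two notions of inner faithfulness is a worthwhile clarification but does not change the argument.
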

\begin{proof}
Together with Example \ref{ex2-22}, Example \ref{ex-FCD} and Theorem \ref{thm-gr-alg}, the result follows.
\end{proof}

The following Corollary follows from  Lemma \ref{pi-ex1} and Theorem \ref{thm-gr-alg} immediately.

\begin{corollary} \label{mod-alg}
Let $H$ be a finite-dimensional Hopf algebra. Assume that
the polynomial algebra $\C[z]$ is an inner faithful $H$-module algebra such that
the actions of $H$ and  $x^m\frac{d}{dx}$, for some $m \geq 0$, on $\C[x]$ commute with each other.
Then $H$ is a group algebra.
\end{corollary}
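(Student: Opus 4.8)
The plan is to reduce Corollary \ref{mod-alg} directly to Theorem \ref{thm-gr-alg} by exhibiting the $H$-module algebra $\C[x]$ as an $H$-module vertex algebra which is $\pi_2$-injective. First I would recall from Example \ref{ex2-22} that whenever $(A,\partial)$ is a commutative differential algebra on which $H$ acts by algebra automorphisms commuting with $\partial$, the commutative vertex algebra $(A, Y^{(A,\partial)}(z), {\bf 1})$ is an $H$-module vertex algebra. Applying this with $A = \C[x]$ and $\partial = x^m \frac{d}{dx}$, and using the hypothesis that the $H$-action commutes with $x^m\frac{d}{dx}$, we obtain that $V := (\C[x], x^m\frac{d}{dx})$ is an $H$-module vertex algebra.

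Next I would observe that the inner faithfulness hypothesis transfers verbatim: $\C[x]$ is an inner faithful $H$-module algebra, which by definition means $I\cdot\C[x] \neq 0$ for every nonzero Hopf ideal $I$ of $H$; since the underlying $H$-module of the vertex algebra $V$ is literally the same vector space $\C[x]$ with the same $H$-action, $V$ is an inner faithful $H$-module vertex algebra in the sense of Definition \ref{def-inner}. Then I would invoke Proposition \ref{pi-ex1}, which asserts precisely that $(\C[x], x^m\frac{d}{dx})$ is $\pi_2$-injective for every $m \geq 0$, so $\pi_2^V$ is injective.

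Having assembled these three facts --- $V$ is an $H$-module vertex algebra, $V$ is inner faithful, and $\pi_2^V$ is injective --- together with the hypothesis that $H$ is finite-dimensional, all the hypotheses of Theorem \ref{thm-gr-alg} are met. Applying that theorem yields immediately that $H$ is a group algebra, which completes the proof.

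Honestly, there is no real obstacle here: the corollary is a direct specialization, and the only thing to be careful about is the bookkeeping that the commutativity of the $H$-action with the chosen derivation $x^m\frac{d}{dx}$ is exactly what Example \ref{ex2-22} requires, and that "inner faithful $H$-module algebra" and "inner faithful $H$-module vertex algebra" coincide because both notions depend only on the $H$-module structure and the notion of Hopf ideal, not on the multiplicative structure. So the write-up would be a one-paragraph chain of citations: Example \ref{ex2-22} $\Rightarrow$ $V$ is an $H$-module vertex algebra; Proposition \ref{pi-ex1} $\Rightarrow$ $\pi_2^V$ injective; hypothesis $\Rightarrow$ inner faithful; Theorem \ref{thm-gr-alg} $\Rightarrow$ done.
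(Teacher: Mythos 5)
Your proposal is correct and follows exactly the route the paper intends: the paper derives Corollary \ref{mod-alg} immediately from Proposition \ref{pi-ex1} and Theorem \ref{thm-gr-alg}, with Example \ref{ex2-22} supplying the $H$-module vertex algebra structure, just as you spell out. Your extra remark that inner faithfulness depends only on the underlying $H$-module structure is the right bookkeeping point and is consistent with Definition \ref{def-inner}.
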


\begin{remark}
It should be noted that the conclusion of Corollary \ref{mod-alg} does not hold
without additional assumptions that the actions of $H$ and  $x^m\frac{d}{dx}$, for some $m \geq 0$, on $\C[x]$ commute with each other.
For instance, in Example \ref{Sweedler}, the algebra $\C[z]$ serves as an inner faithful $\mathcal{H}$-module algebra,
where $\mathcal{H}$ is the Sweedler's Hopf algebra.
However, the Sweedler's Hopf algebra  $\mathcal{H}$ is not a group algebra.


\end{remark}

In \cite{DW}, the authors  propose a conjecture that the kernel of a Hopf action on a simple vertex operator algebra is a Hopf ideal.
In the following Theorem, we will prove that if the Hopf algebra involved in the conjecture is finite-dimensional,
then the conjecture is indeed valid.

\begin{theorem} \label{thm-Hopf-ideal}
Let $H$ be a Hopf algebra, and let $V$ be an $H$-module vertex algebra such that $\pi^V_2$ is injective.
Let $K$ be the kernel of the action of $H$ on $V$.
Then $K$ is a bialgebra ideal of $H$. In particular, if $H$ is finite-dimensional, then $K$ is a Hopf ideal of $H$.
\end{theorem}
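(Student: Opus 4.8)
The plan is to show directly that $K=\ker(H\curvearrowright V)$ satisfies the two defining conditions of a bialgebra ideal, namely that it is a two-sided ideal of $H$ with $\Delta(K)\subseteq H\otimes K+K\otimes H$ and $\epsilon(K)=0$; the Hopf-ideal claim for finite-dimensional $H$ then follows immediately from Lemma \ref{bi-ideal}. The condition $\epsilon(K)=0$ is the easiest: if $k\in K$ then $0=k{\bf 1}=\epsilon(k){\bf 1}$, so $\epsilon(k)=0$ since ${\bf 1}\neq 0$. That $K$ is a two-sided ideal is the general nonsense part: if $k\in K$ and $h\in H$, then $hk$ and $kh$ act as zero on $V$ because $V$ is an $H$-module and multiplication in $H$ is associative, so $K$ is an ordinary ideal of the associative algebra $H$.

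The substance of the argument is the comultiplication condition, and here I would mimic the proof of Lemma \ref{inner-to-faith}. First I would use Lemma \ref{H-iso} (valid because $\pi^V_2$ is injective) to get that the flip $\varphi:V\otimes V\to V\otimes V$, $\varphi(u\otimes v)=v\otimes u$, is an $H$-module isomorphism. Applying $H$-equivariance of $\varphi$ gives
\begin{align*}
\sum h_1 v\otimes h_2 u=\sum h_2 v\otimes h_1 u
\end{align*}
for all $u,v\in V$ and $h\in H$. Now let $k\in K$; I want to conclude $\Delta(k)\in H\otimes K+K\otimes H$, equivalently that the image of $\Delta(k)$ in $(H/K)\otimes(H/K)$ vanishes. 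The key observation is that $V\otimes V$ is a faithful module over $(H/K)\otimes(H/K)$: by the remark following Lemma \ref{linear-inj}, a tensor product of faithful modules over a tensor product of algebras is faithful, and $V$ is a faithful $H/K$-module by definition of $K$. Since $k$ acts as zero on $V$, the element $\Delta(k)$ acts as zero on $V\otimes V$ (its action factors through $k\mapsto 0$ componentwise in the sense that $\sum (k_1 v)\otimes w=0$ and $\sum v\otimes (k_2 w)=0$); pushing this statement down and using faithfulness of $V\otimes V$ over $(H/K)\otimes(H/K)$ yields that $\Delta(k)$ maps to $0$ in $(H/K)\otimes(H/K)$, i.e. $\Delta(k)\in H\otimes K+K\otimes H$, as desired. (The flip identity above is in fact what guarantees $\Delta(k)$ is well-defined modulo the symmetric ambiguity, but for faithfulness the direct argument suffices and the flip is not even strictly needed.)

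The main obstacle, and the only place where $\pi_2$-injectivity is genuinely used, is obtaining the faithfulness of $V\otimes V$ as an $(H/K)\otimes(H/K)$-module in a form that lets us read off a statement about $\Delta(k)$ rather than about some larger quotient — this is exactly the role played by Lemma \ref{H-iso} and Lemma \ref{linear-inj} together. Once that is in hand the rest is formal. I should be slightly careful to phrase the argument so that it works for infinite-dimensional $H$ (no finiteness is needed for the bialgebra-ideal conclusion), and only invoke Lemma \ref{bi-ideal} at the very end to upgrade to a Hopf ideal when $\dim H<\infty$. I expect the write-up to be short, essentially three or four lines after the $\epsilon$ and ideal remarks, with a pointer back to the proof of Lemma \ref{inner-to-faith} for the comultiplication step.
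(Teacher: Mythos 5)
The core of your argument --- the step ``since $k$ acts as zero on $V$, the element $\Delta(k)$ acts as zero on $V\otimes V$'' --- is exactly the point that needs proof, and the justification you offer (that the action ``factors through $k\mapsto 0$ componentwise, in the sense that $\sum (k_1v)\otimes w=0$ and $\sum v\otimes (k_2w)=0$'') is not valid: the action of $\Delta(k)$ on $v\otimes w$ is $\sum k_1v\otimes k_2w$, and nothing about $kV=0$ forces this to vanish. Indeed, if your claim were true for an arbitrary $H$-module it would make the kernel of any $H$-module a bialgebra ideal, which is false (take $H=U(\mathfrak{g})$ with $\mathfrak{g}$ abelian spanned by $a$, and $V=\C$ with $a$ acting as $1$; then $k=a-1$ kills $V$, but $\Delta(k)=a\otimes 1+1\otimes a-1\otimes 1$ acts as $1$ on $V\otimes V$ and $\epsilon(k)\neq 0$). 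So the vertex-algebra structure must enter precisely here, and your write-up never uses it: you invoke Lemma \ref{H-iso} (the flip), which is irrelevant to this theorem, and you locate the use of $\pi_2$-injectivity in the faithfulness of $V\otimes V$ over $(H/K)\otimes(H/K)$, which in fact needs only Lemma \ref{linear-inj} and no hypothesis on $V$ at all.

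The missing ingredient is Definition \ref{D}(3) combined with $\pi_2$-injectivity: for $k\in K$ and $u,v\in V$ one has $k\bigl(Y(u,z)v\bigr)=\sum Y(k_1u,z)k_2v$, and the left-hand side is $0$ because $k$ annihilates every coefficient of $Y(u,z)v$; injectivity of $\pi^V_2$ then gives $\sum k_1u\otimes k_2v=0$ for all $u,v$, i.e.\ $\Delta(k)$ really does act as zero on $V\otimes V$. Once that is established, your final step (injectivity of $\lambda_{V,V}$ from Lemma \ref{linear-inj}, faithfulness of $V$ over $H/K$, hence $\Delta(k)\in K\otimes H+H\otimes K$) is sound and is essentially the contrapositive of the paper's argument, which instead splits $H=K\oplus L$, assumes the $L\otimes L$-component of $\Delta(k)$ is nonzero, produces $u,v$ with $\sum c_ju\otimes d_jv\neq 0$, and derives $kY(u,z)v\neq 0$, a contradiction. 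The $\epsilon(K)=0$ and two-sided-ideal parts, and the final appeal to Lemma \ref{bi-ideal} in the finite-dimensional case, are fine. As written, however, the proposal has a genuine gap at its central step.
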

\begin{proof}

It is easy to see that  $K$ is a two-sided ideal of $H$ and $\epsilon(K)=0$.
To prove that $K$ is a bialgebra ideal of $H$, it only remains to show that
$\Delta(K) \subseteq K \otimes H +H \otimes K.$
Assume that there exists an element $k \in K$ such that $\Delta(k)  \notin K \otimes H +H \otimes K.$
Consider a subspace $L$ of $H$ such that $H=K \oplus L$.
Then we can write
$$\Delta(k)=\sum_{i=1}^s a_i \otimes b_i +\sum_{j=1}^t c_j \otimes d_j$$
where, $s ,t$ are  non-negative integer,
$\sum_{i=1}^s a_i \otimes b_i \in  K \otimes H +H \otimes K,$
and $\sum_{j=1}^t c_j \otimes d_j \in L \otimes L$ is nonzero.
 Let $\rho: H \to \text{End}_\C(V)$ be the map defined by $\rho(h)(v)=hv$ for $h \in H$ and $v \in V$.
 Then $\rho$ is injective on $L$.
By Lemma \ref{linear-inj}, the map
$$\lambda_{V,V}: \text{End}_\C(V) \otimes \text{End}_\C(V) \to \text{End}_\C(V \otimes V)$$ is injective.
Hence, we have
 $$0 \neq \lambda_{V,V}(\rho \otimes \rho)(\sum_{j=1}^t c_j \otimes d_j) \in \text{End}_\C(V \otimes V).$$
Consequently, there exist elements $u, v \in V$ such that
$$0 \neq (\lambda_{V,V} (\rho \otimes \rho)(\sum_{j=1}^t c_j \otimes d_j))(u \otimes v)=\sum_{j=1}^t c_ju \otimes d_jv.$$
Since $\pi^V_2$ is injective, we have
$$kY(u, z)v=\sum_{i=1}^sY(a_iu, z)b_iv + \sum_{j=1}^tY(c_ju, z)d_jv= \sum_{j=1}^tY(c_ju, z)d_jv \neq 0,$$
which contradicts the fact that $k \in K$. Therefore, $K$ is a bialgebra ideal of $H$.
The result follows from Lemma \ref{gr-alg}.
\end{proof}

As an application of Theorem \ref{thm-Hopf-ideal},
we can provide a new proof for Theorem \ref{thm-gr-alg} that does not rely on Lemma \ref{inner-to-faith}.

{\bf The second proof of Theorem \ref{thm-gr-alg}}:
\ Let $K$ be the kernel of the action of $H$ on $V$.
By Theorem \ref{thm-Hopf-ideal}, $K$ is a Hopf ideal of $H$.
Since $V$ is an inner faithful $H$-module, we can conclude that $K=0$, which means that $V$ is a faithful $H$-module.
Therefore, by Lemma \ref{linear-inj}, the tensor product $V \otimes V$ is a faithful $H \otimes H$-module.
On the other hand, it follows from Lemma \ref{H-iso} that the linear map $\varphi: V \otimes V \to V \otimes V$ defined by
$\varphi(u \otimes v)=v\otimes u$ for $u, v \in V$ is an $H$-isomorphic.
Hence, we have $\Sigma h_1v \otimes h_2u = \Sigma h_2 v \otimes h_1 u$ for any $h \in H$ and $u, v \in V$.
Therefore, $\Sigma h_1 \otimes h_2 = \Sigma h_2 \otimes h_1$ for any $h \in H$.
Now the result follows from Lemma \ref{gr-alg}.


\section{Semisimple Hopf actions }

The goal of this section is to establish  a Schur-Weyl type duality for semisimple Hopf actions on Hopf modules of vertex algebras. 
We assume that $V$ is an irreducible vertex algebra of countable dimension
and $H$ is a Hopf algebra such that $V$ is an $H$-module vertex algebra.
Let $M$ be a Hopf $V$-module such that $M$ is an irreducible $V$-module.
Additionally, we assume that both $V$ and $M$ are direct sums of finite-dimensional irreducible $H$-modules.

Let $\Lambda$ be the set of  characters of all finite-dimensional irreducible representations of $H$.
For $\lambda \in \Lambda$, we denote the corresponding irreducible representation by $W_\lambda$.
Let $M^{\lambda}$  be the sum of all $H$-submodules of $M$  isomorphic to $W_{\lambda}$.
Let $M_{\lambda}=\text{Hom}_H(W_{\lambda}, M)$ be the multiplicity space of $W_{\lambda}$ in $M$.
Since the actions of $V^H$ and $H$ on $M$ commute (see Lemma \ref{le2-20}),
$M_{\lambda}$ has a $V^H$-module structure by defining
$$(v_nf)(w)=v_n(f(w))$$
for $v \in V^H, f\in M_{\lambda}, n \in \Z$,  and  $ w \in W_{\lambda}.$
Note that for $\lambda \in \Lambda$, the linear map
$$\theta^\lambda: W_{\lambda} \otimes M_{\lambda} \to V^{\lambda}$$
defined by $$\theta^\lambda(w \otimes f)=f(w)$$
for $w \in W_{\lambda}$ and $f \in M_{\lambda}$, is an $H \otimes V^H$-isomorphism (see \cite{DRY}).
Now,we can decompose $M$ as an $H \otimes V^H$-module as follows:
$$M=\bigoplus_{\lambda \in \Lambda}W_\lambda \otimes M_\lambda.$$

\begin{theorem} \label{main6-1}
The $H \otimes V^H$-module decomposition:
$$M=\bigoplus_{\lambda \in \Lambda}W_\lambda \otimes M_\lambda$$
gives a dual pair  $(H, V^H)$ on $M$ in the following sense:
\begin{enumerate}[{(1)}]

\item For every $\lambda \in \Lambda$ such that $M_\lambda \neq 0$,  $M_\lambda$ is an irreducible $V^H$-module.

\item If $M_\lambda$ and $M_\mu$ are nonzero, then $M_\lambda$ and $M_\mu$ are isomorphic $V^H$-modules if and only if $\lambda=\mu$.
\end{enumerate}
\end{theorem}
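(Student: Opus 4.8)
The plan is to mimic the classical Schur–Weyl / double-commutant argument adapted to the vertex-algebra setting, using the technical backbone of \cite{DRY}. The key structural facts already in place are: $V$ is $\pi_2$-injective (Proposition \ref{pi2inj-exmples}, since $V$ is irreducible of countable dimension), the actions of $H$ and $V^H$ on $M$ commute (Lemma \ref{le2-20}(3)), and $M=\bigoplus_{\lambda\in\Lambda}W_\lambda\otimes M_\lambda$ as an $H\otimes V^H$-module via the isomorphisms $\theta^\lambda$. The first step is to show that $V^H$ ``sees enough'' of $M$: concretely, I would prove a density-type statement saying that the image of $V^H$ inside $\mathrm{End}_\C(M)$ (via the modes $v_n$, $v\in V^H$, $n\in\Z$, together with $\mathcal D$) is exactly the commutant of the $H$-action on $M$. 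One direction is Lemma \ref{le2-20}(3). For the reverse inclusion I would use the hypothesis that $V$ itself is irreducible together with $\pi_2$-injectivity to run a Jacobson-density argument: given a finite-dimensional $H$-stable piece of $M$ and an $H$-endomorphism of it, one produces an element of the associative algebra generated by the modes of $V$ realizing it, then averages over $H$ (using semisimplicity of $H$, which follows from $V$ and $M$ being direct sums of finite-dimensional irreducibles — more precisely one uses the projections onto isotypic components, which lie in the double commutant) to land inside the subalgebra generated by modes of $V^H$. This is the technical heart and is where I expect the main obstacle to be: transporting the $A_n(V)$-free density arguments of \cite{DRY} to the present level of generality, in particular ensuring that the averaging keeps us inside operators coming from $V^H$ rather than merely $H$-invariant operators.

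Granting that density statement, part (1) is standard. Fix $\lambda$ with $M_\lambda\neq 0$ and let $0\neq N\subseteq M_\lambda$ be a $V^H$-submodule. Then $W_\lambda\otimes N\subseteq W_\lambda\otimes M_\lambda$ is stable under both $H$ and $V^H$, hence under the full double commutant; since that double commutant acts on $W_\lambda\otimes M_\lambda$ as $\mathrm{End}(W_\lambda)\otimes(\text{commutant of }H\text{ on }M_\lambda)$ and by density this commutant is the image of $V^H$, irreducibility of $M_\lambda$ as a $V^H$-module reduces to the claim that the image of $V^H$ in $\mathrm{End}_\C(M_\lambda)$ is dense. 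That in turn follows because the $H$-action on $M^\lambda\cong W_\lambda\otimes M_\lambda$ has commutant $\mathrm{End}_\C(W_\lambda)$-isotypically equal to $\mathrm{id}_{W_\lambda}\otimes\mathrm{End}_\C(M_\lambda)$ by finite-dimensional representation theory of the semisimple $H$, combined with the density statement of the previous paragraph applied to the isotypic component. So $N=M_\lambda$.

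For part (2), suppose $\phi:M_\lambda\to M_\mu$ is a $V^H$-module isomorphism with $M_\lambda,M_\mu\neq 0$. Then $\mathrm{id}_{W_\lambda}\otimes\phi$ is not directly a map between $W_\lambda\otimes M_\lambda$ and $W_\mu\otimes M_\mu$ unless $W_\lambda\cong W_\mu$, so instead I would argue as follows: the submodule $M^\lambda\oplus M^\mu$ of $M$ is $H\otimes V^H$-stable, and on it the commutant of the $V^H$-action is controlled by $\mathrm{Hom}_{V^H}(M_\lambda,M_\mu)$ and $\mathrm{Hom}_{V^H}(M_\mu,M_\lambda)$ together with $\mathrm{End}_\C(W_\lambda)$, $\mathrm{End}_\C(W_\mu)$. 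By the density statement this commutant is the image of $H$. If $\phi$ were an isomorphism the commutant would contain the ``off-diagonal'' operator $\binom{0\ \ 0}{\,\mathrm{id}_{W_?}\otimes\phi\ \ 0}$ — but matching it against an element of $H$ acting via $\Delta$ forces, on the $W$-tensorands, an $H$-module map $W_\lambda\to W_\mu$, which is nonzero only if $\lambda=\mu$ since the $W$'s are irreducible and inequivalent. Hence $\lambda=\mu$. The converse is trivial. The one place requiring care here is making the ``commutant of $V^H$ on $M^\lambda\oplus M^\mu$ equals image of $H$'' precise; this is again an instance of the double-commutant theorem once the density statement of paragraph one is established, and it is the only genuinely new input beyond \cite{DRY}.
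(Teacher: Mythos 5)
Your plan hinges on a density statement that you never establish, and the route you sketch for it does not work as stated. First, the literal claim that the image of $V^H$ (via its modes) in $\mathrm{End}_\C(M)$ \emph{equals} the commutant of $H$ cannot be right in this algebraic, infinite-dimensional setting; at best one has a Jacobson-density version relative to finite-dimensional subspaces. Second, your proposed proof of it --- realize an $H$-map by modes of $V$ and then ``average over $H$, using semisimplicity of $H$, which follows from $V$ and $M$ being direct sums of finite-dimensional irreducibles'' --- is flawed: semisimplicity of $V$ and $M$ as $H$-modules does not make $H$ a semisimple (or even finite-dimensional) algebra, e.g.\ $H=U(\mathfrak g)$, so there is no integral to average with. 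The paper avoids averaging entirely: it takes the surjection $\rho: V\otimes\C[t,t^{-1}]\to\mathrm{Hom}_\C(X,M)$ for $X$ a finite-dimensional $H$-submodule (surjectivity is Lemma \ref{f=un}, the density input from \cite{DRY}), checks that $\rho$ is an $H$-map for the antipode-twisted action $(h\cdot f)(x)=\sum h_1f(S(h_2)x)$ --- a computation using the Hopf-module compatibility that your proposal omits --- and then uses only semisimplicity of $V\otimes\C[t,t^{-1}]$ as an $H$-module to get $\rho(V^H\otimes\C[t,t^{-1}])=\mathrm{Hom}_H(X,M)$ (Lemma \ref{f=ov}). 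With that lemma, part (1) is the explicit transitivity argument on $X=W_\lambda\otimes x$, close to what you describe.

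Part (2) has a second, independent gap: you invoke ``the commutant of the $V^H$-action on $M^\lambda\oplus M^\mu$ is the image of $H$''. That is the \emph{reverse} commutant statement, not an instance of your paragraph-one density claim, and it is essentially the duality you are trying to prove; an algebraic double-commutant theorem is not available here without further argument. The paper's proof needs nothing of the sort: assuming a $V^H$-isomorphism $\phi:M_\lambda\to M_\mu$ with $\lambda\ne\mu$, set $y=\phi(x)$, take the finite-dimensional $H$-submodule $X=(W_\lambda\otimes x)\oplus(W_\mu\otimes y)$ and the $H$-map $f$ projecting onto the second summand; realizing $f$ by modes of elements of $V^H$ via Lemma \ref{f=ov} gives operators killing $x$ and fixing $y$, and applying $\phi$ to the first relation yields $y=0$, a contradiction. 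So the correct fix for your proposal is to replace both the unproved ``image equals commutant'' statement and the $H$-averaging by the equivariant finite-dimensional density lemma, and to rerun part (2) with the explicit two-block Hom argument rather than a bicommutant identification.
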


To proof Theorem \ref{main6-1}, we first note that $V \otimes \C[t, t^{-1}]$ has an $H$-module structure by defining
$$h(u \otimes t^n)=hu \otimes t^n$$
for $h \in H, u \in V$ and $n \in \Z$.
Moreover, we have
$$(V \otimes \C[t, t^{-1}])^H=V^H \otimes \C[t, t^{-1}].$$
For a subspace $X$ of $M$, define a linear map
\begin{align} \label{eq6-1}
\rho : V \otimes \C[t, t^{-1}] \rightarrow \text{Hom}_{\C}(X, M)
\end{align}
by $$\rho(u \otimes t^n)x=u_nx$$
for $u \in V, n \in \Z$ and $x \in X$.

\begin{lemma} \label{f=un} \cite{DRY}
Let $V$ be an irreducible  vertex algebra of countable dimension, and let $M$ be an irreducible $V$-module.
Let $X$ be a finite-dimensional subspace of $M$.
Then for any $f\in \text{Hom}_{\C}(X, M)$,
there exist $v^1, \cdots, v^n \in V$, and $i_1, \cdots, i_n \in \Z$ such that
$$f=v^1_{i_1}+ \cdots + v^n_{i_n}.$$
\end{lemma}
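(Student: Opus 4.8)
The plan is to reduce the statement to the irreducibility of $M$ as a $V$-module, exactly in the spirit of \cite{DRY}. Fix a finite-dimensional subspace $X$ of $M$. The key claim is that the map $\rho$ in \eqref{eq6-1}, restricted appropriately, surjects onto $\operatorname{Hom}_\C(X,M)$; more precisely, given $f\in\operatorname{Hom}_\C(X,M)$ we want to realize $f$ as a finite sum $v^1_{i_1}+\cdots+v^n_{i_n}$ with $v^j\in V$ and $i_j\in\Z$, viewed as operators $X\to M$. So the first step is to show that the associative subalgebra of $\operatorname{Hom}_\C(X,M)$ generated by the operators $u_n|_X$ (for $u\in V$, $n\in\Z$) together with the image of the module map is, in fact, all of $\operatorname{Hom}_\C(X,M)$.

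The main tool is the Jacobson density theorem. First I would argue that $M$ is an irreducible module over the associative algebra $\mathcal{A}$ generated inside $\operatorname{End}_\C(M)$ by all the operators $u_n$ ($u\in V$, $n\in\Z$): any nonzero $\mathcal{A}$-submodule of $M$ is a nonzero $V$-submodule of $M$, hence equals $M$ since $M$ is irreducible. (Here one must be slightly careful: a $V$-submodule in the vertex-algebra sense need not a priori be $\mathcal{D}$-stable, but for modules this is automatic, or one can absorb $\mathcal{D}$ into $\mathcal{A}$ as in the proof of Proposition \ref{pi2inj-exmples}, where $A(V,\mathcal D)$ was used.) Next, since $V$ has countable dimension and $M$ is irreducible, $\operatorname{End}_{\mathcal A}(M)=\C$: this is the standard countable-dimensional version of Schur's lemma (Dixmier's lemma), using that $\C$ is algebraically closed and $\operatorname{End}_{\mathcal A}(M)$ is a division algebra of countable dimension over $\C$, hence equals $\C$. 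With these two facts in hand, $M$ is an absolutely irreducible faithful $\mathcal A$-module, so Jacobson density applies: for the finite-dimensional subspace $X\subseteq M$ and any prescribed $\C$-linear map $f:X\to M$, there is an element $a\in\mathcal A$ with $a|_X=f$.

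Finally, since $\mathcal A$ is spanned by finite products of operators of the form $u_n$, and every such product can itself be rewritten as a finite sum of single operators $v_i$ with $v\in V$, $i\in\Z$ — this is precisely the content of the iterate/associativity formula for vertex algebras, i.e. the components of $Y(Y(u,z_0)v,z_2)$ expand the composition $Y(u,z_1)Y(v,z_2)$ modulo suitable powers of $(z_1-z_2)$, so that a product $u_m v_n$ acting on $M$ becomes a finite $\C$-linear combination of operators $(u_jv)_k$ — we get that $a|_X$, and hence $f$, is a finite sum $v^1_{i_1}+\cdots+v^n_{i_n}$ as operators on $X$. This yields the lemma.

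The step I expect to be the main obstacle is the careful justification that arbitrary products of the $u_n$ reduce to finite sums of single $v_i$'s on a \emph{module}: one needs the weak-associativity/iterate formula in the module setting and must track the truncation (only finitely many terms survive because $v_n x=0$ for $n\gg0$ when $x\in M$) and the fact that we are only asked to match the operator on a \emph{finite-dimensional} $X$, which controls the relevant powers. A secondary technical point is the countable-dimension Schur lemma, but that is standard; the bookkeeping in the associativity reduction, done so that the final expression genuinely lands in $\{v^1_{i_1}+\cdots+v^n_{i_n}\}$, is where the real care is needed, and this is exactly why \cite{DRY} is invoked as the model for the argument.
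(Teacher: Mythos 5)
Your proposal is correct and follows essentially the same route as the proof this paper relies on (the lemma is quoted from \cite{DRY}, where the argument is exactly Jacobson density applied to the algebra of modes, the countable-dimension Schur/Dixmier lemma to get $\text{End}_{\mathcal A}(M)=\C$, and the weak-associativity reduction of products $u_mv_n$ to single modes on a fixed vector, hence on the finite-dimensional subspace $X$). The technical points you flag — uniform truncation on $X$ and induction on the length of mode products — are handled in the standard way and pose no real obstacle.
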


\begin{lemma} \label{f=ov}
If $X$ is a finite-dimensional $H$-submodule of $M$, then the map $\rho$ in (\ref{eq6-1}) is an $H$-epimorphism. In particular, we have
$$ \text{Hom}_{H}(X, M)= \text{Hom}_{\C}(X, M)^H= \rho(V^H \otimes \C[t, t^{-1}]).$$
\end{lemma}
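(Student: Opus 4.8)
The plan is to show that the $H$-equivariant map $\rho : V \otimes \C[t,t^{-1}] \to \operatorname{Hom}_\C(X,M)$ is surjective, and then to read off the statement about $\operatorname{Hom}_H(X,M)$ by taking $H$-invariants. For surjectivity, observe first that $\rho$ is indeed an $H$-module map: for $h \in H$, $u \in V$, $n \in \Z$, and $x \in X$, using the compatibility $h(u_n x) = \sum (h_1 u)_n (h_2 x)$ one checks $\rho(h(u\otimes t^n)) = h \cdot \rho(u \otimes t^n)$ in $\operatorname{Hom}_\C(X,M)$ with the $H$-action $(h\cdot f)(x) = \sum h_1 f(S(h_2)x)$. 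Then, since $X$ is a finite-dimensional subspace of the irreducible $V$-module $M$, Lemma \ref{f=un} tells us that \emph{every} $f \in \operatorname{Hom}_\C(X,M)$ is of the form $v^1_{i_1} + \cdots + v^n_{i_n}$, i.e. lies in the image of $\rho$. Hence $\rho$ is surjective, so it is an $H$-epimorphism onto $\operatorname{Hom}_\C(X,M)$.

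Next I would pass to $H$-invariants. Since $X$ is a finite-dimensional $H$-submodule and $M$ is a direct sum of finite-dimensional irreducible $H$-modules, the functor of taking $H$-invariants is exact enough for our purposes here; concretely, from the surjection $\rho : V \otimes \C[t,t^{-1}] \twoheadrightarrow \operatorname{Hom}_\C(X,M)$ of $H$-modules we want to deduce that the restriction to invariants $\rho : (V \otimes \C[t,t^{-1}])^H \to \operatorname{Hom}_\C(X,M)^H$ is still surjective. This is where one uses that $H$ acts semisimply on the relevant modules (or, in the finite-dimensional case at hand, that $\operatorname{Hom}_\C(X,M)$ decomposes as a sum of finite-dimensional $H$-modules so that invariants behave well): an invariant element $f \in \operatorname{Hom}_\C(X,M)^H$ lifts to some preimage in $V \otimes \C[t,t^{-1}]$, and one can average (or use the semisimple splitting) to arrange that the preimage is itself $H$-invariant. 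Combined with the identity $(V \otimes \C[t,t^{-1}])^H = V^H \otimes \C[t,t^{-1}]$ noted just before the lemma, this gives $\operatorname{Hom}_\C(X,M)^H = \rho(V^H \otimes \C[t,t^{-1}])$.

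Finally, I would invoke the general Hopf-algebra fact recalled in the preliminaries, namely $\operatorname{Hom}_H(X,M) = \operatorname{Hom}_\C(X,M)^H$, to conclude the chain of equalities $\operatorname{Hom}_H(X,M) = \operatorname{Hom}_\C(X,M)^H = \rho(V^H \otimes \C[t,t^{-1}])$, which is exactly the assertion of the lemma.

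The main obstacle I anticipate is the exactness step: justifying that an $H$-epimorphism between these (generally infinite-dimensional) $H$-modules restricts to an epimorphism on $H$-invariants. The cleanest route is to restrict attention to the image sitting inside $\operatorname{Hom}_\C(X,M)$, which \emph{is} a direct sum of finite-dimensional $H$-modules because $M$ is and $X$ is finite-dimensional; on such a semisimple module the isotypic projection onto the trivial component is $H$-equivariant and splits, so invariants are a direct summand and surjectivity is preserved. One should also take a little care that $\rho$ lands in $\operatorname{Hom}_\C(X,M)$ rather than some completion — but this is immediate since for fixed $u$ and varying $x$ in the finite-dimensional space $X$, only finitely many modes $u_n$ act nontrivially is \emph{not} automatic, yet $\rho(u \otimes t^n)$ is a perfectly well-defined single linear map $X \to M$ for each fixed $n$, so no completion issue arises.
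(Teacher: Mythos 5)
Your overall strategy is the same as the paper's: verify that $\rho$ is an $H$-module map using the compatibility (\ref{Hopf-V-module}) together with the antipode axiom, obtain surjectivity of $\rho$ onto $\text{Hom}_{\C}(X,M)$ from Lemma \ref{f=un}, invoke $\text{Hom}_H(X,M)=\text{Hom}_{\C}(X,M)^H$ and $(V\otimes\C[t,t^{-1}])^H=V^H\otimes\C[t,t^{-1}]$, and then pass to $H$-invariants.

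The one step where your justification is off target is the passage to invariants. You anchor it on the codomain, asserting that $\text{Hom}_{\C}(X,M)\cong M\otimes X^{*}$ is a direct sum of finite-dimensional $H$-modules and treating it as semisimple, with ``averaging'' as an alternative. Under the section's hypotheses this is a gap: a direct sum of finite-dimensional $H$-modules need not be semisimple for a general Hopf algebra, semisimplicity of $M$ does not by itself give semisimplicity of $M\otimes X^{*}$ (that would require semisimplicity of $H$, which is not assumed), and averaging presupposes a normalized integral, which is unavailable here since $H$ may be infinite dimensional (Application II takes $H=U(\mathfrak{g})$). More importantly, even if the codomain were known to be semisimple, splitting off its invariants does not produce an \emph{invariant preimage}: for that one needs the kernel of $\rho$ to be a direct summand on the \emph{domain} side. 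The correct justification --- and the paper's one-line argument --- is that $V\otimes\C[t,t^{-1}]$ is a semisimple $H$-module because $V$ is assumed to be a direct sum of finite-dimensional irreducible $H$-modules; hence $\ker\rho$ has an $H$-stable complement $C$, the restriction $\rho|_{C}$ is an $H$-isomorphism onto $\text{Hom}_{\C}(X,M)$ (which is thereby semisimple, as a consequence rather than an input), and so $\rho\bigl((V\otimes\C[t,t^{-1}])^H\bigr)=\text{Hom}_{\C}(X,M)^H$. Your parenthetical ``use the semisimple splitting'' is the right idea, but it must be applied to the domain; with that relocation your proof matches the paper's. (Your closing remark about completions is indeed a non-issue, though as written it is garbled: each $\rho(u\otimes t^{n})$ is simply a linear map $X\to M$, so nothing further is required.)
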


\begin{proof}
It follows from Lemma \ref{f=un} that the linear map $\rho$ is surjective.
To complete the proof, we must show that $\rho$ is an $H$-homomorphism.
For $h \in H, u \in V, x \in X$, and $n \in \Z$, by identity (\ref{Hopf-V-module}),  we have
\begin{align*}
&(h \cdot \rho(u \otimes t^n))(x)=\sum h_1u_n(S(h_2)x)\\
&=\sum (h_1u)_n(h_2S(h_3)x)\\
&=\sum (h_1u)_n(\epsilon(h_2)x)\\
&=\sum (hu)_nx\\
&=\rho(h(u \otimes t^n))(x).
\end{align*}
Hence $\rho$ is an $H$-homomorphism.
Because $V$ is a semisimple H-module, the result follows.
\end{proof}

Now the proof of Theorem \ref{main6-1} is similar to that of \cite[Theorem 4.7]{DRY}.

\medskip

{\bf Proof of Theorem \ref{main6-1} }
(1) Let $\lambda \in \Lambda$ such that $M^\lambda \neq 0$.
For any nonzero elements $x$ and $y$ in $M_\lambda$,
we need to find  $v^1, \cdots, v^n \in V^H$, and $i_1, \cdots, i_n \in \Z$ such that $y =(v^1_{i_1}+ \cdots + v^n_{i_n})x$.
Consequently, $M_\lambda$ is an irreducible $V^H$-module.

Let $X= W_\lambda \otimes x$. Then $X$ is a finite dimensional $H$-submodule of $M$.
Define a  linear map $f \in \text{Hom}_\C(X, M)$ by $f(w \otimes x)=w \otimes y$  for $w \in W_\lambda$.
It is claer that $f \in \text{Hom}_H(X, M)$.
By Lemma \ref{f=ov}, there exist $v^1, \cdots, v^n \in V^H$, and $i_1, \cdots, i_n \in \Z$ such that
$$f=v^1_{i_1}+ \cdots + v^n_{i_n}.$$
Thus  we have
$$w \otimes y =f(w \otimes x)=(v^1_{i_1}+ \cdots + v^n_{i_n})(w \otimes x)=w \otimes (v^1_{i_1}+ \cdots + v^n_{i_n})x,$$
for any $w \in W_\lambda$. This implies that $(v^1_{i_1}+ \cdots + v^n_{i_n})x=y$, as required.

(2) Let $\lambda,  \mu \in \Lambda$ such that $\lambda \neq \mu$, and  $M_\lambda, M_\mu$ are nonzero.
Assume that $\phi: M_\lambda \to M_\mu$ is  a $V^H$-isomorphism.
Let $0\ne x\in M_\lambda$, and let $y=\phi(x)\in M_\mu$.
Set $$X= (W_\lambda \otimes x) \oplus (W_\mu \otimes y)$$  which is a finite-dimensional  $H$-submodule of $M$.
Define an $f \in  \text{Hom}_H(X, M)$
such that $$f(w_1\otimes x+w_2\otimes y)=w_2\otimes y$$
for $w_1\in W_\lambda$ and $w_2\in W_\mu.$
It follows from Lemma \ref{f=ov} that there exist $v^1, \cdots, v^n \in V$ and $i_1, \cdots, i_n \in \Z$ such that
$$f=v^1_{i_1}+ \cdots + v^n_{i_n}.$$
Then $(v^1_{i_1}+ \cdots + v^n_{i_n})x=0$ and $(v^1_{i_1}+ \cdots + v^n_{i_n})y=y.$
So
$$0=\phi((v^1_{i_1}+ \cdots + v^n_{i_n})x)=(v^1_{i_1}+ \cdots + v^n_{i_n})\phi(x)=(v^1_{i_1}+ \cdots + v^n_{i_n})y=y,$$
which is a contradiction. The proof is complete.      ~~~~~~~~~~~~~~~~~~~~~~~~~~~~~~~~~~~~~~~~~~~~~~~~~~~~~~~~~~~~~~~~~~~~$\Box$

\medskip

At the end of the article, we  provide two applications of Theorem \ref{main6-1} in $\N$-graded vertex algebra.
\medskip

\begin{definition}
An {\em $\N$-graded vertex algebra } is a vertex algebra $V$ equipped with a decomposition
 $V=\bigoplus_{n \in \N }V_n$ satisfying

\begin{enumerate}[{(1)}]
\item ${\bf 1} \in V_0$;

\item $v_sV_n \subseteq V_{n+m-s-1}$ for $v \in V_m, s \in \Z, m, n \in \N$.
\end{enumerate}
\end{definition}

\begin{definition}
Let $V$ be an $\N$-graded vertex algebra. An {\em admissible $V$-module} is an $\N$-graded $V$-module $M=\bigoplus_{n \in \N}M(n)$
satisfying $$v_sM(n) \subseteq M(n+m-s-1)$$ for $v \in V_m, s \in \Z, n, m \in \N$.
\end{definition}

{\bf Application I.} Let $V = \oplus_{n \in \N}V_n$ be an irreducible  $\N$-graded  vertex algebra
such that  $\text{dim}V_n < \infty$ for any $n \in \N$.
Let $G$ be a compact automorphism group of $V$.
Assume that the action of $G$ on $V$ is continuous, and that $g V_n \subseteq V_n$ for  any $g \in G, n \in \N$.
Let $\Lambda$ be the set of  characters of all finite-dimensional irreducible representations of $G$.
For $\lambda \in \Lambda$, we denote the corresponding irreducible representation by $W_\lambda$.
Let $V_{\lambda}=\text{Hom}_G(W_{\lambda}, V)$ be the multiplicity space of $W_{\lambda}$ in $V$.

Since each homogeneous subspace $V_n$ of $V$ is finite-dimensional,
hence $V$ is a direct sum of  finite-dimensional irreducible $G$-modules.
Now, the following Corollary is an immediate consequence of Theorem \ref{main6-1}.

\begin{corollary} \cite{DLM1} We have the following statements.
\begin{enumerate}[{(1)}]


\item For every $\lambda \in \Lambda$ with $V_\lambda \neq 0$,  $V_\lambda$  is an irreducible $V^G$-module;

\item If $V_\lambda$ and $V_\mu$ are  both nonzero, then $V_\lambda$ and $V_\mu$ are isomorphic $V^G$-modules if and only if $\lambda=\mu$.
\end{enumerate}
\end{corollary}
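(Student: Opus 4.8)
The statement is a special case of Theorem~\ref{main6-1} once we exhibit the correct Hopf algebra and Hopf $V$-module structures, so the plan is to reduce to that theorem rather than to re-run its proof. The one genuinely new ingredient is that $G$ is a \emph{compact} group acting continuously with $gV_n\subseteq V_n$, rather than a finite group sitting inside $\mathrm{Aut}(V)$; everything else is bookkeeping.

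\emph{Step 1: set up the module structures.} Take $M=V$ as a Hopf $V$-module over itself, so the compatibility \eqref{Hopf-V-module} is exactly condition (3) of Definition~\ref{D}; it holds because $V$ is a $\C[G]$-module vertex algebra (each $g\in G$ is an automorphism), and $V$ is irreducible as a $V$-module by hypothesis. The role of the Hopf algebra $H$ is played not by the full group algebra $\C[G]$ but by the algebra of matrix coefficients / the appropriate continuous-representation completion; concretely, the relevant fact is that by Peter--Weyl, the continuous finite-dimensional representation theory of the compact group $G$ is semisimple and every continuous $G$-module that is a union of finite-dimensional continuous submodules decomposes as a direct sum of finite-dimensional irreducibles. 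Here each $V_n$ is a finite-dimensional continuous $G$-module (finite-dimensionality is the hypothesis $\dim V_n<\infty$, continuity is assumed), hence decomposes into finite-dimensional irreducibles, and therefore $V=\bigoplus_n V_n$ is a direct sum of finite-dimensional irreducible continuous $G$-modules. This is precisely the ``direct sum of finite-dimensional irreducible $H$-modules'' hypothesis of Theorem~\ref{main6-1}, with $\Lambda$ the set of characters of the finite-dimensional irreducible representations of $G$.

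\emph{Step 2: check the commuting actions and apply the theorem.} The actions of $V^G=V^{\C[G]}$ and $G$ on $M=V$ commute by Lemma~\ref{le2-20}(3), so $V_\lambda=\mathrm{Hom}_G(W_\lambda,V)$ carries a $V^G$-module structure via $(v_nf)(w)=v_n(f(w))$, and $\theta^\lambda\colon W_\lambda\otimes V_\lambda\to V^\lambda$ is a $G\otimes V^G$-isomorphism, giving the decomposition $V=\bigoplus_{\lambda}W_\lambda\otimes V_\lambda$. Now Theorem~\ref{main6-1} applies verbatim: for every $\lambda$ with $V_\lambda\neq0$ the space $V_\lambda$ is an irreducible $V^G$-module, and $V_\lambda\cong V_\mu$ as $V^G$-modules iff $\lambda=\mu$. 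The only place one must be slightly careful is that the proof of Theorem~\ref{main6-1} invokes Lemma~\ref{f=ov}, whose proof uses that $V$ is a semisimple $H$-module; in the compact-group setting this is again Peter--Weyl semisimplicity together with the fact that $X=W_\lambda\otimes x$ (or the two-summand $X$ in part (2)) is a finite-dimensional continuous $G$-submodule, so $\mathrm{Hom}_G(X,V)=\mathrm{Hom}_\C(X,V)^G$ and the averaging/projection argument goes through.

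\emph{Main obstacle.} The only real subtlety is the transition from ``finite group / group algebra'' to ``compact group / continuous representations'': one must make sure that (a) the $\C[G]$-module-vertex-algebra formalism of Sections~3--5 and of Theorem~\ref{main6-1} is robust enough to accommodate a topological group acting continuously, which it is, since all the arguments only ever use finite-dimensional pieces $V_n$ (and finite-dimensional submodules $X$), and on those pieces a continuous $G$-action is literally a module over the (finite-dimensional quotient of the) group algebra; and (b) that semisimplicity and the classification of irreducibles, used in Lemma~\ref{f=ov} and in forming the isotypic decomposition, hold for continuous finite-dimensional $G$-modules --- this is exactly the content of the Peter--Weyl theorem. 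Once these two points are granted, there is no computation left: the corollary is Theorem~\ref{main6-1} specialized to $M=V$ with $H$ replaced by the continuous representation theory of the compact group $G$.
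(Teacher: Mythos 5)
Your reduction is essentially the paper's own: take $M=V$, observe that each finite-dimensional homogeneous piece $V_n$ is a continuous representation of the compact group $G$ and hence completely reducible, so that $V$ (which is automatically of countable dimension) is a direct sum of finite-dimensional irreducible $G$-modules, and then quote Theorem \ref{main6-1}. One small correction: there is no need to replace the group algebra by "the algebra of matrix coefficients" (over that Hopf algebra $V$ would be a comodule, not a module); since Theorem \ref{main6-1} imposes no finiteness or semisimplicity condition on $H$ itself, only that $V$ and $M$ decompose into finite-dimensional irreducible $H$-modules, the choice $H=\C[G]$ with $G$ viewed as an abstract group works verbatim, and your Peter--Weyl argument is exactly what supplies that decomposition hypothesis.
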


{\bf Application II.} Let $V = \bigoplus_{n \in \N}V_n$ be an irreducible $\N$-graded vertex algebra
such that $V_0= \C{\bf 1}$ and  $\text{dim}V_n < \infty$ for any $n \in \N$.
Let $M=\bigoplus_{n \in \N}M(n)$ be an simple admissible $V$-module such that $\text{dim}(M(n)) < \infty$ for any $n \in \N$.
Note that $V_1$ is a Lie algebra with Lie bracket $[u, v]=u_0v$ for $u, v \in V_1$.
Moreover, $M$ is a $V_1$-module under the action defined by $u\cdot m =u_0 m$  for $u \in V $ and $ m \in M$.
We also note that $u \cdot M(n) \subseteq M(n)$ for any $u \in V_1$ and $n \in \N$.

Let $\mathfrak{g}$ be a finite-dimensional semisimple Lie subalgebra of $V_1$.
Then both $V$ and $M$ are direct sums of finite-dimensional irreducible $\mathfrak{g}$-submodules.
Let $P$ be the set of all dominant weights of $\mathfrak{g}$.
For $\lambda \in P$, let $L(\lambda)$ be the irreducible highest weight module corresponding to $\lambda$. 
Let $M^{\lambda}$ be the sum of all $\mathfrak{g}$-submodules of $M$ isomorphic to $L(\lambda)$.
Let $M_\lambda$ be the subspace of all highest weight vectors in $M^\lambda$.
We note that $M_\lambda \cong \text{Hom}_\mathfrak{g}(L(\lambda), M)$.
Let $V^{\mathfrak{g}}=\{v \in V \ | \ gv=0 \ \text{for all} \ g \in \mathfrak{g} \}.$
It is easy to show that each $M_\lambda$ is a $V^\mathfrak{g}$-module.


\begin{corollary} \label{Cor6.7}
 We have the following statement.
\begin{enumerate}[{(1)}]

\item For every $\lambda \in P$ with $M_\lambda \neq 0$, $M_\lambda$ is an irreducible $V^\mathfrak{g}$-module;

\item If $M_\lambda$ and $M_\mu$ are nonzero, then $M_\lambda$ and $M_\mu$ are isomorphic $V^\mathfrak{g}$-modules if and only if $\lambda=\mu$.

\end{enumerate}

\end{corollary}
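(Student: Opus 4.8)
The plan is to deduce Corollary \ref{Cor6.7} from Theorem \ref{main6-1} by taking $H=U(\mathfrak{g})$. First I would record that $\mathfrak{g}$ embeds into $\text{Der}(V)$ via $g\mapsto g_0$: for $g\in V_1$ the axiom $Y(g,z){\bf 1}\in V[[z]]$ gives $g_0{\bf 1}=0$, and the commutator formula $[g_m,Y(v,z)]=\sum_{i\geq 0}\binom{m}{i}Y(g_iv,z)z^{m-i}$ (a consequence of the Jacobi identity, see \cite{LL}) specialized to $m=0$ gives $[g_0,Y(v,z)]=Y(g_0v,z)$; moreover $[g_0,g'_0]=(g_0g')_0$ matches the Lie bracket on $V_1$, so $g\mapsto g_0$ is a Lie algebra map $\mathfrak{g}\to\text{Der}(V)$. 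Hence by Example \ref{exp-der-action}, $V$ is a $U(\mathfrak{g})$-module vertex algebra with $V^{U(\mathfrak{g})}=V^{\mathfrak{g}}$.

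Next I would check the remaining hypotheses of Theorem \ref{main6-1}. The vertex algebra $V=\bigoplus_{n}V_n$ has countable dimension since each $V_n$ is finite-dimensional, and it is irreducible by assumption. The module $M$, viewed without its grading, is an ordinary $V$-module carrying the $U(\mathfrak{g})$-action $g\mapsto g_0$; the Hopf $V$-module compatibility $h(Y_M(u,z)w)=\sum Y_M(h_1u,z)h_2w$ holds because it holds on the generators $g\in\mathfrak{g}$, where $\Delta(g)=g\otimes 1+1\otimes g$ reduces it to the module commutator formula $[g_0,Y_M(u,z)]=Y_M(g_0u,z)$ at $m=0$, and the set of $h$ for which it holds is a subalgebra of $U(\mathfrak{g})$. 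Finally, every $g_0$ preserves the grading of $V$ and of $M$ (the grading axioms applied to a weight-one element and mode zero give $g_0V_n\subseteq V_n$ and $g_0M(n)\subseteq M(n)$), so each $V_n$ and each $M(n)$ is a finite-dimensional $\mathfrak{g}$-module, hence completely reducible by Weyl's theorem; thus $V$ and $M$ are direct sums of finite-dimensional irreducible $U(\mathfrak{g})$-modules.

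With the hypotheses in place, I would match the data of Theorem \ref{main6-1} to that of Application II: for $\mathfrak{g}$ semisimple the finite-dimensional irreducible $U(\mathfrak{g})$-modules are exactly the $L(\lambda)$, $\lambda\in P$, so $W_\lambda=L(\lambda)$ and $\Lambda$ is in bijection with $P$; the space $M^\lambda$ of Theorem \ref{main6-1} is the $L(\lambda)$-isotypic component, agreeing with the $M^\lambda$ of Application II; and the multiplicity space $\text{Hom}_{U(\mathfrak{g})}(L(\lambda),M)=\text{Hom}_{\mathfrak{g}}(L(\lambda),M)$ is identified with the space $M_\lambda$ of highest-weight vectors of weight $\lambda$ in $M$ via $\phi\mapsto\phi(v_\lambda)$ for a fixed highest-weight vector $v_\lambda\in L(\lambda)$. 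This identification is $V^{\mathfrak{g}}$-linear: for $v\in V^{\mathfrak{g}}$ and $n\in\Z$ the action of $v_n$ on $\text{Hom}$ sends $\phi$ to $w\mapsto v_n(\phi(w))$, and $v_n$ commutes with the $\mathfrak{g}$-action on $M$ by Lemma \ref{le2-20}(3), so $v_n(\phi(v_\lambda))$ is again a highest-weight vector, matching the $V^{\mathfrak{g}}$-action on $M_\lambda$ in Application II. Parts (1) and (2) of Theorem \ref{main6-1} then translate verbatim into (1) and (2) of Corollary \ref{Cor6.7}.

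The main obstacle is the gap between the two running hypotheses on $M$: Theorem \ref{main6-1} requires $M$ to be irreducible as an ordinary $V$-module, while Application II posits only a \emph{simple} admissible module with finite-dimensional graded pieces. The crux is therefore to justify that, under the standing assumptions ($V$ irreducible $\N$-graded with $V_0=\C{\bf 1}$ and finite-dimensional graded pieces, $M$ simple admissible with finite-dimensional graded pieces), $M$ is irreducible as an ordinary $V$-module; everything else — the verification of the Hopf $V$-module axioms and the $V^{\mathfrak{g}}$-equivariance of the multiplicity-space identification — is routine.
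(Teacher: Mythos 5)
Your reduction is exactly the paper's: its entire proof consists of the observation that each $u\in\mathfrak{g}$ acts on both $V$ and $M$ as a derivation (via the zero mode $u_0$ and the $m=0$ commutator formula), so that $V$ is a $U(\mathfrak{g})$-module vertex algebra and $M$ is a Hopf $V$-module, followed by an appeal to Theorem \ref{main6-1}. Your first three paragraphs carry out these verifications -- primitivity of $\mathfrak{g}$ reducing the Hopf compatibility to $[g_0,Y_M(u,z)]=Y_M(g_0u,z)$ and multiplicativity of the condition, Weyl's theorem on the finite-dimensional pieces $V_n$ and $M(n)$, countability of $\dim V$, and the $V^{\mathfrak{g}}$-equivariant identification $M_\lambda\cong\text{Hom}_{\mathfrak{g}}(L(\lambda),M)$ -- correctly and in considerably more detail than the paper, which states none of them explicitly.

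The one point at which your proposal stops, namely showing that the simple admissible module $M$ is irreducible as an ordinary $V$-module, is not a step the paper supplies either: its proof invokes Theorem \ref{main6-1} directly, i.e.\ it reads the hypothesis ``simple admissible $V$-module'' in Application II as meaning that $M$ is irreducible as a $V$-module (the reading required by the standing assumptions of Section 6), and gives no argument for passing from graded simplicity to ungraded irreducibility. Note that the usual device for closing such a gap is unavailable here: $V$ is merely an $\N$-graded vertex algebra, with no conformal vector and hence no $L(0)$ whose eigenspace decomposition of the finite-dimensional $M(n)$ would force an arbitrary submodule to be graded. So if one insists on the graded-simple reading, the ``crux'' you flag is a genuine gap -- but it is a gap in how the hypotheses of Application II interface with Theorem \ref{main6-1}, shared by the paper's own one-line proof, not a missing idea in your argument. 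As submitted, your proof is the paper's proof written out carefully, and it is complete once ``simple'' is taken to mean irreducible as a $V$-module; if you prefer the graded-simple reading, you should say explicitly that the additional hypothesis of ordinary irreducibility must be imposed (or proved by other means), since neither you nor the paper derives it.
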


\begin{proof}
Since for any $u \in \mathfrak{g}$, the actions of $u$ on both $V$ and $M$ are all derivation actions,
it follows that $V$ is a $U(\mathfrak{g})$-module vertex algebra, and $M$ is a  Hopf $V$-module.
Now the result follows from Theorem \ref{main6-1}.

\end{proof}

\begin{remark}
If $M=V$, Corollary \ref{Cor6.7}  have been established in \cite{DLM1}.
\end{remark}

\end{document}